\topskip \setlength{\parindent}{0pt} \setlength{\parskip}{5pt plus
\newtheorem{definition}{Definition}
\newtheorem{theorem}{Theorem}
\newtheorem{lemma}{Lemma}
\newtheorem{corollary}{Corollary}
\newtheorem{remark}{Remark}
\numberwithin{equation}{section} 
\numberwithin{definition}{section}
\numberwithin{theorem}{section}
\numberwithin{lemma}{section}
\numberwithin{corollary}{section}
\numberwithin{remark}{section}
\begin{document}
\title{\bf{An extension of Apostol type of Hermite-Genocchi polynomials and their probabilistic representation}}
\author {Beih S. El-Desouky,  \footnote{Corresponding author: b\_desouky@yahoo.com }Rabab S. Gomaa and Alia M. Magar}
\address{Department of Mathematics, Faculty of Science, \\Mansoura University, 35516 Mansoura, Egypt}
\begin{abstract}
The main purpose of this paper is to introduce and investigate the various properties of a new generalization of Apostol Hermite - Genocchi polynomials. We derive many useful results involving new generalized Apostol Hermite - Genocchi polynomials. We also consider some statistical application of the new family in probability distribution theory and reliability.\\

\noindent {\bf Keywords}: Hermite distribution, Generalized Hermite distribution, Hermite polynomials, Genocchi polynomials, Hermite-Genocchi polynomials, Apostol-Genocchi polynomials, Discrete distribution, Reliability.\\

\noindent {\bf AMS Subject Classification}: 11B68, 33C45, 33C99, 05A10.
\end{abstract}
\maketitle
\section{Introduction}
The generalized Apostol-Bernoulli polynomials $ B_{n}^{(\alpha)}(x;\lambda)$ of order $\alpha\in \mathbb{C},$ the generalized Apostol-Euler polynomials
$ E_{n}^{(\alpha)}(x;\lambda)$ of order $\alpha\in \mathbb{C}$ and the generalized Apostol-Genocchi polynomials
$ G_{n}^{(\alpha)}(x;\lambda)$ of order $\alpha\in \mathbb{C}$ are defined, see (\cite{lu1}, \cite{lu2}, \cite{lu3} and \cite{lu5}) respectively, through the generating function by:
\begin{equation*}
 \left( \frac{t}{\lambda e^{t}-1}\right)^{\alpha}\;e^{xt}=
 \sum_{n=0}^{\infty}B_{n}^{^{(\alpha)}}(x;\lambda)\frac{t^{n}}{n!},
\end{equation*}
\begin{equation}\label{1}
 (\mid t\mid <2\pi,\;\; \text{when}\;\; \lambda=1;\;\; \mid t\mid <\mid\log \lambda\mid,\;\; \text{when}\;\; \lambda\neq 1,\;\;1^{\alpha}:=1)
\end{equation}
\begin{equation*}
 \left( \frac{2}{\lambda e^{t}+1}\right)^{\alpha}\;e^{xt}=
 \sum_{n=0}^{\infty}E_{n}^{^{(\alpha)}}(x;\lambda)\frac{t^{n}}{n!},
\end{equation*}
\begin{equation}\label{2}
 (\mid t\mid <\pi,\;\; \text{when}\;\; \lambda=1;\;\; \mid t\mid <\mid\log (-\lambda)\mid,\;\; \text{when}\;\; \lambda\neq 1,\;\;1^{\alpha}:=1)
\end{equation}
and
\begin{equation*}
 \left( \frac{2t}{\lambda e^{t}+1}\right)^{\alpha}\;e^{xt}=
 \sum_{n=0}^{\infty}G_{n}^{^{(\alpha)}}(x;\lambda)\frac{t^{n}}{n!},
\end{equation*}
\begin{equation}\label{3}
 (\mid t\mid <\pi,\;\; \text{when}\;\; \lambda=1;\;\; \mid t\mid <\mid\log(- \lambda)\mid,\;\; \text{when}\;\; \lambda\neq 1,\;\;1^{\alpha}:=1)
\end{equation}
Gaboury and Kurt \cite{kurt} gave a new class of the generalized Apostol Hermite-Genocchi polynomials by means of the following generating function:
\begin{equation}\label{4}
\left( \frac{2t}{\lambda e^{t}+1}\right)^{\alpha}\;e^{xt+yt^{2}}=
 \sum_{n=0}^{\infty}{}_{H}G_{n}^{^{(\alpha)}}(x,y;\lambda)\frac{t^{n}}{n!}.
\end{equation}
In the case $ \alpha=1$, it is reduced to Apostol Hermite-Genocchi polynomials defined by Dattoli et al. \cite{datt} as follows:
\begin{equation}\label{5}
 \frac{2t}{e^{t}+1}\;e^{xt+yt^{2}}=
 \sum_{n=0}^{\infty}{}_{H}G_{n}(x,y)\frac{t^{n}}{n!}.
\end{equation}
The generalized Apostol-Genocchi polynomials with the parameters $a,\; b$ and $c$ are defined by Jolany et al. \cite{jolany} in the following form:
\begin{equation}\label{6}
 \sum_{n=0}^{\infty} G_{n}(x;a,b,c;\lambda)\frac{t^{n}}{n!}=\frac{2t}{\lambda b^{t}+a^{t}}\;\;c^{xt},
\end{equation}
where $a,\;b$ and $c$ are positive integers.\\
For a real or complex parameter $\alpha,$ the generalized Apostol-Genocchi polynomials $ G_{n}^{^{(\alpha)}}(x;a,b,c;\lambda)$ of order $\alpha$ with parameters  $a,\; b $ and $c$ are defined by
\begin{equation}\label{7}
 \sum_{n=0}^{\infty} G_{n}^{^{(\alpha)}}(x;a,b,c;\lambda)\frac{t^{n}}{n!}=\left(\frac{2t}{\lambda b^{t}+a^{t}}\right)^{\alpha}\;\;c^{xt}.
\end{equation}
Gaboury and Kurt \cite{kurt} gave the generalization of the Apostol Hermite-Genocchi polynomials with parameters $a,\; b $ and $c$, as follows:
 \begin{equation*}
   \sum_{n=0}^{\infty} {}_{H}G_{n}^{^{(\alpha)}}(x,y;a,b,c;\lambda)\frac{t^{n}}{n!}=\left(\frac{2t}{\lambda b^{t}+a^{t}}\right)^{\alpha}\;\;c^{xt+yt^{2}}.
 \end{equation*}
 \begin{equation}\label{8}
    \left( \mid t\mid <\mid \frac{\log(-\lambda)}{\log(\frac{b}{a})}\mid;\,\,a\in\mathbb{C}\backslash \{0\},\; b,\; c\in \mathbb{R}^{+};\,\,1^{\alpha}:=1\right).
 \end{equation}
For a real or complex parameter $\alpha,$ the generalized Apostol Hermite-Genocchi polynomials $ {}_{H}G_{n}^{^{(\alpha)}}(x;a,b,c;\lambda)$ of order $\alpha$ with parameters $a\,, b $ are defined by, \cite{kurt}
 \begin{equation*}
   \sum_{n=0}^{\infty} {}_{H}G_{n}^{^{(\alpha)}}(x,y;a,b,e;\lambda)\frac{t^{n}}{n!}=\left(\frac{2t}{\lambda b^{t}+a^{t}}\right)^{\alpha}\;\;e^{xt+yt^{2}}.
 \end{equation*}
 \begin{equation}\label{l}
    \left( \mid t\mid <\mid \frac{\log(-\lambda)}{\log(\frac{b}{a})}\mid;\,\,a\in\mathbb{C}\backslash \{0\},\; b\in \mathbb{R}^{+};\,\,1^{\alpha}:=1\right).
 \end{equation}

Araci et al. \cite{araci} introduced a new concept of the Apostol Hermite-Genocchi polynomials by means of the following generating function:
\begin{equation*}
  \sum_{n=0}^{\infty} G_{n}^{^{(\alpha)}}(x,y;a,b,c;\lambda)\frac{t^{n}}{n!}=\left(\frac{2t}{\lambda b^{t}+a^{t}}\right)^{\alpha}\;\;c^{xt+h(t,y)}.
\end{equation*}
\begin{equation}\label{9}
 \left( \mid t\mid <\mid \frac{\log(-\lambda)}{\log(\frac{b}{a})}\mid;\,\,a\in\mathbb{C}\backslash \{0\},\; b,\;c\in \mathbb{R}^{+};\,\,1^{\alpha}:=1\right).
\end{equation}

Let $c$ be positive integer. The generalized 2-variable 1-parameter Hermite Kamp'e de Feriet polynomials $H_{n}(x,y,c)$ for nonnegative integer $n$ is defined by (see \cite{bell}, \cite{patha} and \cite{pathb})
\begin{equation}\label{H}
\sum_{n=0}^{\infty}{H}_{n}(x,y,c)\frac{t^{n}}{n!}=c^{xt+yt^{2}},
\end{equation}
which is an extention of 2-variable Hermite Kamp'e de Feriet polynomials $H_{n}(x,y)$ defind by
\begin{equation}\label{H2}
\sum_{n=0}^{\infty}{H}_{n}(x,y)\frac{t^{n}}{n!}=e^{xt+yt^{2}}.
\end{equation}

In 1975, Nakagawa and Osaki \cite{Naka} were the first to study a discrete life time distribution
which is defined as the discrete counterpart of the usual continuous Weibull distribution.
In 1982, Salvia and Bollinger \cite{Salvia} introduced basic results about discrete reliability and illustrated them with the simple discrete life distributions. The characterization of discrete distributions has been studied by Roy, Gupta, Gupta in 1999 \cite{Roy}. In 1997, Gupta, Gupta and Tripahi \cite{Gupta} introduced wide classes of discrete distributions with increasing failure
rate. In 1997, Nair and Asha \cite{Nair} introduced some classes of multivariate life distributions
in discrete time.

\section{A new generalization of the Apostol Hermite-Genocchi polynomials}
\begin{definition}\hfill \\
Let $a, b$ and $c$ be positive integers with the condition $a\neq b$. A new generalization of the Apostol Hermite-Genocchi polynomials
$\mathbb{}_{H}{M}_{n}^{(r)}(x,y;a,b,c;\bar\alpha_{r})$ for nonnegative integer $n$ is defined by means by the generating function
\begin{equation*}
\sum_{n=0}^{\infty}\mathbb{}_{H}{M}_{n}^{^{(r)}}(x,y;k,a,b,c;\bar\alpha_{r})\frac{t^{n}}{n!}=\frac{(-1)^{r}t^{rk}2^{r(1-k)}}{\prod\limits^{r-1}_{i=0}(\alpha_{i}b^{t}-a^{t})} \,\,c^{xt+h(t,y)},
\end{equation*}
\begin{equation}\label{10}
 \left( \mid t\mid <\mid \frac{\log(\alpha_{i})}{\log(\frac{b}{a})}\mid;\,\, a,b,c\in \mathbb{R}^{+};\,\, \alpha_{i}\neq1;\,\, \forall\,\, i=0,1,\cdots,r-1\right),
\end{equation}
where $r\in\mathbb{C};\,\, \bar\alpha_{r}=(\alpha_{0},\alpha_{1},\cdots,\alpha_{r-1})$ is a sequence of complex numbers.
\end{definition}

Setting $ h(t,y)=yt^{2}$ in \eqref{10}, we get the following definition.

\begin{definition}\hfill\\
Let $a, b$ and $c$ be positive integers with the condition $a\neq b$. A new generalization of the Apostol Hermite-Genocchi polynomials
$\mathbb{}_{H}{M}_{n}^{^{(r)}}(x,y;a,b,c;\bar\alpha_{r})$ for nonnegative integer $n$ is defined by means by the generating function
\begin{equation*}
\sum_{n=0}^{\infty}\mathbb{}_{H}{M}_{n}^{(r)}(x,y;a,b,c;\bar\alpha_{r})\frac{t^{n}}{n!}=\frac{(-1)^{r}t^{rk}2^{r(1-k)}}{\prod\limits^{r-1}_{i=0}(\alpha_{i}b^{t}-a^{t})} \,\,c^{xt+yt^{2}},
\end{equation*}
\begin{equation}\label{11}
\left( \mid t\mid <\mid \frac{\log(\alpha_{i})}{\log(\frac{b}{a})}\mid;\,\, a,b,c\in \mathbb{R}^{+};\,\, \alpha_{i}\neq1;\,\, \forall\,\, i=0,1,\cdots,r-1\right),
\end{equation}
where $r\in\mathbb{C};\,\, \bar\alpha_{r}=(\alpha_{0},\alpha_{1},\cdots,\alpha_{r-1})$ is a sequence of complex numbers.
\end{definition}
\begin{remark}
If we set $x=y=0$ in \eqref{11}, then we obtain the new unified generalized Apostol Hermite-Genocchi numbers, as\\
\center{$ \mathbb{}_{H}{M}_{n}^{(r)}(0,0;a,b,1;\bar\alpha_{r})=\mathbb{}_{H}{M}_{n}^{(r)}(a,b;\bar\alpha_{r}).$}
\end{remark}
\subsection{Special cases}
\begin{enumerate}
  \item  setting $\alpha_{i}=-\lambda \,\,and \,\, k=1 \,\,in \eqref{10},$ we get
\begin{center}
$\mathbb{}_{H}{M}_{n}^{^{(r)}}(x,y;1,a,b,c;-\lambda)=2^{-r}\;G_{n}^{^{(r)}}(x,y;a,b,c;\lambda).$
\end{center}
(Generalized Apostol-Genocchi polynomials,\; see \cite{araci} )
  \item  setting $\alpha_{i}=-\lambda\,\, and \,\, k=1\,\, in \eqref{11},$ we get
\begin{center}
  $\mathbb{}_{H}{M}_{n}^{^{(r)}}(x,y;a,b,c;-\lambda)=2^{-r}\; _{H}G_{n}^{^{(r)}}(x,y;a,b,c;\lambda).$
\end{center}
(Generalized Apostol Hermite-Genocchi polynomials of order $r$, see \cite{kurt})
  \item  setting $\alpha_{i}=-\lambda, \,\, k=1\,\, and \,\,c=e\,\, in \eqref{11},$ we have
\begin{center}
  $\mathbb{}_{H}{M}_{n}^{^{(r)}}(x,y;a,b,e;-\lambda)=2^{-r}\; _{H}G_{n}^{^{(r)}}(x,y;a,b;\lambda).$
\end{center}
(Generalized Apostol Hermite-Genocchi polynomials with parameter $a$, $b$ and $c$, see \cite{kurt})
  \item  setting $\alpha_{i}=\lambda \,\,and \,\, k=1 \,\,in \eqref{11},$ we get
\begin{center}
  $\mathbb{}_{H}{M}_{n}^{^{(r)}}(x,y;a,b,c;\lambda)=(-1)^{r} B_{n}^{^{(r)}}(x,y;a,b,c;\lambda).$
\end{center}
(Generalized Apostol-Bernoulli polynomials of order r,\;see \cite{sriv}).
  \item setting $\alpha_{i}=-\lambda\,\, and \,\, k=0 \,\,in \eqref{11},$ we get
\begin{center}
  $\mathbb{}_{H}{M}_{n}^{^{(r)}}(x,y;a,b,c;-\lambda)=E_{n}^{(r)}(x,y;a,b,c;\lambda).$
\end{center}
(Generalized Apostol-Euler polynomials of order r,\; see \cite{sriv}).
   \item  setting $c=b=e,\; y=0,\; and \,\,a=1\,\, in \eqref{11},$ we get
\begin{center}
  $\mathbb{}_{H}{M}_{n}^{^{(r)}}(x,y;1,e,e;\bar{\alpha_{r}})=(-1)^{-r}M_{n}^{^{(r)}}(x;k,\bar{\alpha_{r}}).$
\end{center}
(Unified family of generalized Apostol-Bernoulli, Euler and Genocchi polynomials, see \cite{bs}).
   \item  setting $\alpha_{i}=-\lambda ,\, k=1,c=b=e\,\, and \,\,a=1\,\, in \eqref{11},$ we get
\begin{center}
  $\mathbb{}_{H}{M}_{n}^{^{(r)}}(x,y;1,e,e;-\lambda)=(-2)^{r} _{H}G_{n}^{^{(r)}}(x,y;\lambda).$
\end{center}
(Generalized Apostol Hermite-Genocchi polynomials,\; see \cite{kurt}).
  \item  setting $\alpha_{i}=\lambda ,\, k=1,c=b=e\,\, and \,\,a=1\,\, in \eqref{11},$ we get
\begin{center}
  $\mathbb{}_{H}{M}_{n}^{^{(r)}}(x,y;1,e,e;\lambda)=(-1)^{r} _{H}B_{n}^{^{(r)}}(x,y;\lambda).$
\end{center}
(Hermite-based generalized Apostol-Bernoulli polynomials,\;see \cite{kurt}).
\end{enumerate}
\begin{theorem}
Let $a,\;b $ and $c$ be positive integers with the rule $a\neq b.$ For $x\in \mathbf{R}$ and $n\geq 0.$ Then we have
\begin{equation}\label{22}
 \mathbb{}_{H}{M}_{n}^{(r+\beta)}(x+y,z+u;k,a,b,c;\bar{\alpha_{r}})=\sum_{k=0}^{\infty}\binom{n}{k} \mathbb{}_{H}{M}_{n}^{(r)}(y,z;k,a,b,c;\bar{\alpha_{r}}) \mathbb{}_{H}{M}_{n}^{(\beta)}(x,u;k,a,b,c;\bar{\alpha_{r}}).
\end{equation}
\end{theorem}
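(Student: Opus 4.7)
The natural approach is to verify the identity at the level of generating functions and then read off coefficients of $t^{n}/n!$. Write the generating function from Definition~2.1 for order $r+\beta$ and argument $(x+y,\,z+u)$:
\begin{equation*}
\sum_{n=0}^{\infty}\mathbb{}_{H}{M}_{n}^{(r+\beta)}(x+y,z+u;k,a,b,c;\bar\alpha_{r+\beta})\frac{t^{n}}{n!}
=\frac{(-1)^{r+\beta}\,t^{(r+\beta)k}\,2^{(r+\beta)(1-k)}}{\prod_{i=0}^{r+\beta-1}(\alpha_{i}b^{t}-a^{t})}\,c^{(x+y)t+(z+u)t^{2}}.
\end{equation*}
The key observation is that each ingredient on the right splits multiplicatively in the order parameter: the power $t^{(r+\beta)k}$ factors as $t^{rk}\cdot t^{\beta k}$, the constant $2^{(r+\beta)(1-k)}$ splits as $2^{r(1-k)}\cdot 2^{\beta(1-k)}$, the sign $(-1)^{r+\beta}$ splits as $(-1)^{r}(-1)^{\beta}$, the product over $i$ splits into $i=0,\ldots,r-1$ and $i=r,\ldots,r+\beta-1$, and finally the Hermite-type factor splits as
\begin{equation*}
c^{(x+y)t+(z+u)t^{2}}=c^{\,yt+zt^{2}}\cdot c^{\,xt+ut^{2}}.
\end{equation*}

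Grouping the first $r$ factors with $c^{yt+zt^{2}}$ and the last $\beta$ factors with $c^{xt+ut^{2}}$, the right-hand side becomes the product of two generating functions of the form given in Definition~2.1: one producing $\mathbb{}_{H}{M}_{n}^{(r)}(y,z;k,a,b,c;\bar\alpha_{r})$ and the other producing $\mathbb{}_{H}{M}_{n}^{(\beta)}(x,u;k,a,b,c;\bar\alpha_{\beta})$ (with the parameter sequences being the two subsegments of $\bar\alpha_{r+\beta}$). Applying the Cauchy product of power series,
\begin{equation*}
\Bigl(\sum_{j\ge 0}A_{j}\frac{t^{j}}{j!}\Bigr)\Bigl(\sum_{\ell\ge 0}B_{\ell}\frac{t^{\ell}}{\ell!}\Bigr)=\sum_{n\ge 0}\Bigl(\sum_{j=0}^{n}\binom{n}{j}A_{j}B_{n-j}\Bigr)\frac{t^{n}}{n!},
\end{equation*}
and then comparing coefficients of $t^{n}/n!$ yields the claimed convolution identity.

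The proof is essentially mechanical once the factorization above is recognized; the only step requiring real attention is the bookkeeping of the three order-dependent pieces (the prefactor $(-1)^{r}t^{rk}2^{r(1-k)}$, the denominator product, and the Hermite exponential) so that each factor on the right is still of the precise form appearing in~\eqref{10}. I would therefore carry the argument out in this order: (i) expand the generating function for order $r+\beta$; (ii) perform the three simultaneous splittings; (iii) identify the two resulting factors with the generating functions of $\mathbb{}_{H}{M}_{n}^{(r)}(y,z;\cdot)$ and $\mathbb{}_{H}{M}_{n}^{(\beta)}(x,u;\cdot)$; (iv) Cauchy-multiply and equate coefficients. No deeper tool is needed beyond the definition itself.
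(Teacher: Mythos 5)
Your proposal is correct and follows essentially the same route as the paper: expand the generating function for order $r+\beta$ at the shifted arguments, factor it into the product of the order-$r$ and order-$\beta$ generating functions, and conclude by the Cauchy product and comparison of coefficients of $t^{n}/n!$. Your explicit bookkeeping of how the sign, the power of $t$, the power of $2$, the denominator product over $i$, and the exponential factor each split is in fact slightly more careful than the paper's own one-line factorization.
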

\begin{proof}
From \eqref{11}, we get
\begin{equation*}
   \sum_{n=0}^{\infty} \mathbb{}_{H}{M}_{n}^{(r+\beta)}(x+y,z+u;k,a,b,c;\bar{\alpha_{r}}) =\frac{(-1)^{r+\beta}2^{(r+\beta)(1-k)}t^{(r+\beta)k}}{\prod\limits_{i=0}^{(r+\beta)-1}(\alpha_{i}b^{t}-a^{t})}c^{(x+y)t+(z+u)t^{2}}
\end{equation*}

\begin{equation*}
 \;\;\;\;\;\;\;\;\;\;\;\;\;\;\;\;\;\;\;\;\;\;\;\;\;\;\;\;\;\;\;\;\;\;\;\;\;\;\;\;\;\;\;\;= \left( \sum_{n=0}^{\infty} \mathbb{}_{H}{M}_{n}^{(r)}(y,z;k,a,b,c;\bar{\alpha_{r}})\frac{t^{n}}{n!}\right)\left( \sum_{n=0}^{\infty} \mathbb{}_{H}{M}_{n}^{(\beta)}(x,u;k,a,b,c;\bar{\alpha_{r}})\frac{t^{n}}{n!}\right).
\end{equation*}

By using Cauchy product and comparing the coefficients of $t^{n}$ on both sides, yields \eqref{22}.
\end{proof}
\begin{theorem}\hfill\\
Let a, b and c be positive integers with the rule $a\neq b$ for $x\in\mathbf{R}$ and $n \geq 0$. Then we have:
\begin{equation}\label{12}
 \mathbb{}_{H}{M}_{n}^{(r)}(x+z,y;k,a,b,c;\bar{\alpha})=\sum_{m=0}^{n}\binom{n}{m} \mathbb{}_{H}{M}_{n-m}^{(r)}(z;k,a,b,c;\bar{\alpha}) H_{m}(x,y,c).
\end{equation}
\end{theorem}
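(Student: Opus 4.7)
The plan is to prove \eqref{12} by manipulating the defining generating function \eqref{11} and using the generating function \eqref{H} for the $2$-variable $1$-parameter Hermite--Kamp\'e de F\'eriet polynomials $H_m(x,y,c)$, then extracting coefficients via a Cauchy product.

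First I would start from \eqref{11} applied with the shifted first argument $x+z$:
\begin{equation*}
\sum_{n=0}^{\infty}\mathbb{}_{H}{M}_{n}^{(r)}(x+z,y;k,a,b,c;\bar{\alpha})\frac{t^{n}}{n!}
=\frac{(-1)^{r}t^{rk}2^{r(1-k)}}{\prod\limits^{r-1}_{i=0}(\alpha_{i}b^{t}-a^{t})}\,c^{(x+z)t+yt^{2}}.
\end{equation*}
The key algebraic step is the elementary factorization
\begin{equation*}
c^{(x+z)t+yt^{2}} \;=\; c^{zt}\cdot c^{xt+yt^{2}},
\end{equation*}
which splits the exponential piece so that the first factor recombines with the rational prefactor into the generating function of $\mathbb{}_{H}{M}_{n-m}^{(r)}(z;k,a,b,c;\bar{\alpha})$ (the $y=0$ reduction of \eqref{11}, i.e., the ``one-variable'' instance), while the second factor is precisely \eqref{H}.

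Next I would rewrite the product as two Taylor series and apply the Cauchy product:
\begin{equation*}
\left(\sum_{n=0}^{\infty}\mathbb{}_{H}{M}_{n}^{(r)}(z;k,a,b,c;\bar{\alpha})\frac{t^{n}}{n!}\right)
\!\left(\sum_{m=0}^{\infty}H_{m}(x,y,c)\frac{t^{m}}{m!}\right)
=\sum_{n=0}^{\infty}\!\left(\sum_{m=0}^{n}\binom{n}{m}\mathbb{}_{H}{M}_{n-m}^{(r)}(z;k,a,b,c;\bar{\alpha})\,H_{m}(x,y,c)\right)\!\frac{t^{n}}{n!}.
\end{equation*}
Equating the coefficient of $t^{n}/n!$ on both sides then yields \eqref{12}.

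The only non-routine point, and thus the main obstacle, is the identification of the prefactor $\frac{(-1)^{r}t^{rk}2^{r(1-k)}}{\prod_{i=0}^{r-1}(\alpha_{i}b^{t}-a^{t})}\,c^{zt}$ as the generating function for $\mathbb{}_{H}{M}_{n-m}^{(r)}(z;k,a,b,c;\bar{\alpha})$; this uses the convention that the abbreviated symbol with a single spatial argument denotes the specialization $y=0$ of \eqref{11}, which is consistent with the notation already adopted in the statement. Once that identification is granted, the proof reduces to the standard generating-function/Cauchy-product argument sketched above, of exactly the same flavor as the proof of the preceding Theorem, so no further technical difficulty arises.
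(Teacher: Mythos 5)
Your proposal is correct and follows essentially the same route as the paper: both split $c^{(x+z)t+yt^{2}}=c^{zt}\cdot c^{xt+yt^{2}}$, identify the prefactor times $c^{zt}$ with the generating function of $\mathbb{}_{H}{M}_{n}^{(r)}(z;k,a,b,c;\bar{\alpha})$ and $c^{xt+yt^{2}}$ with the generating function \eqref{H} of $H_{m}(x,y,c)$, and then equate coefficients via the Cauchy product. The paper's proof is exactly this argument, stated more tersely.
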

\begin{proof}
From \eqref{11}, we have
\begin{eqnarray*}
  \sum_{n=0}^{\infty}\mathbb{}_{H}{M}_{n}^{(r)}(x+z,y;k,a,b,c;\bar{\alpha})\frac{t^{n}}{n!} &=& \frac{(-1)^{r}2^{r(1-k)}t^{rk}}{\prod\limits_{i=0}^{r-1}(\alpha_{i}b^{t}-a^{t})}c^{(x+z)t+yt^{2}} \\
 &=& \sum_{n=0}^{\infty}\mathbb{}_{H}{}_{H}{M}_{n}^{(r)}(z;k,a,b,c;\bar{\alpha})\frac{t^{n}}{n!}\sum_{m=0}^{\infty}H_{m}(x,y,c)\frac{t^{m}}{m!},
\end{eqnarray*}
using Cauchy product and equating the coefficients of $t^{n}$ on both sides of the last equation, yields \eqref{12}.
\end{proof}
\section{Implicit summation formulas on the  generalized Apostol Hermite-Genocchi polynomials }
\begin{theorem}\hfill\\
Let a, b and c be positive integers with $a\neq b$ for $x,\; y\in\mathbf{R}$ and $n \geq 0$. Then we have:
\begin{equation}\label{120}
 \mathbb{}_{H}{M}_{n}^{(r)}(x+z,y;k,a,b,c;\bar{\alpha})=\sum_{l=0}^{n}\binom{n}{l} z^{n-l}(\ln c)^{n-l}\mathbb{}_{H}{M}_{n}^{(r)}(x,y;k,a,b,c;\bar{\alpha}).
\end{equation}
\end{theorem}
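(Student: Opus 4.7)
The plan is to derive the identity directly from the defining generating function \eqref{11}, by isolating the shift in $x$ as an exponential factor and expanding it as a power series.

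First, I would start from the left-hand side and write
\begin{equation*}
\sum_{n=0}^{\infty}\mathbb{}_{H}{M}_{n}^{(r)}(x+z,y;k,a,b,c;\bar{\alpha})\frac{t^{n}}{n!}
=\frac{(-1)^{r}2^{r(1-k)}t^{rk}}{\prod_{i=0}^{r-1}(\alpha_{i}b^{t}-a^{t})}\,c^{(x+z)t+yt^{2}}.
\end{equation*}
The key observation is the multiplicative split $c^{(x+z)t+yt^{2}}=c^{xt+yt^{2}}\cdot c^{zt}$, which turns the $z$-shift into an independent exponential factor. Writing $c^{zt}=e^{(z\ln c)t}$ and expanding,
\begin{equation*}
c^{zt}=\sum_{m=0}^{\infty}\frac{(z\ln c)^{m}}{m!}\,t^{m}.
\end{equation*}

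Next, I would recognize the remaining factor as the generating function for $\mathbb{}_{H}{M}_{l}^{(r)}(x,y;k,a,b,c;\bar{\alpha})$, giving
\begin{equation*}
\sum_{n=0}^{\infty}\mathbb{}_{H}{M}_{n}^{(r)}(x+z,y;k,a,b,c;\bar{\alpha})\frac{t^{n}}{n!}
=\left(\sum_{l=0}^{\infty}\mathbb{}_{H}{M}_{l}^{(r)}(x,y;k,a,b,c;\bar{\alpha})\frac{t^{l}}{l!}\right)\left(\sum_{m=0}^{\infty}\frac{(z\ln c)^{m}}{m!}\,t^{m}\right).
\end{equation*}
A Cauchy product of the two series produces a coefficient of $t^{n}/n!$ equal to $\sum_{l=0}^{n}\binom{n}{l}z^{n-l}(\ln c)^{n-l}\mathbb{}_{H}{M}_{l}^{(r)}(x,y;k,a,b,c;\bar{\alpha})$. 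Equating coefficients of $t^{n}$ on both sides yields \eqref{120}.

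There is no real obstacle here: the identity is essentially a binomial-shift relation that falls out of the exponential nature of the $c^{xt}$ factor, entirely analogous to the argument already used in the proof of Theorem~2.2. The only thing to be careful about is to keep the indexing of the summed polynomial as $\mathbb{}_{H}{M}_{l}^{(r)}$ (rather than $\mathbb{}_{H}{M}_{n}^{(r)}$, as printed in \eqref{120}), so that the Cauchy product matches; this is evidently a typographical slip in the stated formula.
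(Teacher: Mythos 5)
Your proof is correct and follows essentially the same route as the paper's: split $c^{(x+z)t+yt^{2}}=c^{xt+yt^{2}}c^{zt}$, expand $c^{zt}$ as an exponential series, and take the Cauchy product. Your remark that the summand index should be $\mathbb{}_{H}{M}_{l}^{(r)}$ rather than $\mathbb{}_{H}{M}_{n}^{(r)}$ correctly identifies a typographical slip that the paper's own statement (and proof) also leaves uncorrected.
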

\begin{proof}
From Eq. \eqref{11}, we have
\begin{eqnarray*}
  \sum_{n=0}^{\infty}\mathbb{}_{H}{M}_{n}^{(r)}(x+z,y;k,a,b,c;\bar{\alpha})\frac{t^{n}}{n!} &=& \frac{(-1)^{r}2^{r(1-k)}t^{rk}}{\prod\limits_{i=0}^{r-1}(\alpha_{i}b^{t}-a^{t})}c^{(x+z)t+yt^{2}} \\
 &=& \sum_{i=0}^{\infty}\mathbb{}_{H}{M}_{i}^{(r)}(x,y;k,a,b,c;\bar{\alpha})\frac{t^{i}}{i!}\sum_{l=0}^{\infty}\frac{(z\;t\; \log c)^{l}}{l!},
\end{eqnarray*}
using Cauchy product and equating the coefficients of $t^{n}$ on both sides of the last equation, yields \eqref{120}.
\end{proof}

\begin{theorem}\hfill\\
Let $a,\;b$ and $c$ positive integers, by $a\neq b$ then, for $ x,\;y \in \mathbf{R}$ and $n,\;m \geq0,$ we have
\begin{equation}\label{14}
\mathbb{}_{H}{M}_{n+m}^{(r)}(z,y;k,a,b,c;\bar{\alpha_{r}})=\sum_{s=0}^{m}\sum_{\ell=0}^{n}\binom{m}{s}\binom{n}{\ell}(\log c)^{s+\ell}(z-x)^{s+\ell}\;\mathbb{}_{H}{M}_{n+m-s-\ell}^{(r)}(x,y;k,a,b,c;\bar{\alpha_{r}}).
\end{equation}
\end{theorem}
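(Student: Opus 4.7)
The plan is to use the standard two-variable trick of substituting $t \mapsto t+u$ in the defining generating function \eqref{11} and then evaluating the result in two different ways. The key observation is that, after this substitution, the Genocchi-type factor and the quadratic part $y(t+u)^2$ depend only on $t+u$ and so are unchanged when we shift the spatial argument from $z$ to $x$; the entire discrepancy is absorbed into an elementary exponential factor $c^{(z-x)(t+u)}$, which expands cleanly.

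First, I would substitute $t \mapsto t+u$ in \eqref{11} and use the binomial theorem $(t+u)^N = \sum_{n+m=N}\binom{N}{n} t^n u^m$ to rewrite the left-hand side as a double generating function,
$$\sum_{n,m\geq 0} \mathbb{}_{H}{M}_{n+m}^{(r)}(z,y;k,a,b,c;\bar\alpha_r)\,\frac{t^n u^m}{n!\,m!} = \frac{(-1)^r (t+u)^{rk}\, 2^{r(1-k)}}{\prod_{i=0}^{r-1}(\alpha_i b^{t+u}-a^{t+u})}\, c^{z(t+u)+y(t+u)^2}.$$
Factoring $c^{z(t+u)+y(t+u)^2} = c^{(z-x)(t+u)}\cdot c^{x(t+u)+y(t+u)^2}$ and recognising the remaining product as the same generating function evaluated at $x$, the right-hand side becomes
$$c^{(z-x)(t+u)}\sum_{n,m\geq 0} \mathbb{}_{H}{M}_{n+m}^{(r)}(x,y;k,a,b,c;\bar\alpha_r)\,\frac{t^n u^m}{n!\,m!}.$$

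Next, I would expand the exponential prefactor as the product of two geometric series in $t$ and $u$,
$$c^{(z-x)(t+u)} = \left(\sum_{\ell\geq 0}\frac{((z-x)\log c)^\ell}{\ell!}\,t^\ell\right)\left(\sum_{s\geq 0}\frac{((z-x)\log c)^s}{s!}\,u^s\right),$$
apply the Cauchy product in both variables, and equate the coefficient of $t^n u^m/(n!\,m!)$ on the two sides. Reassembling the factorials as binomial coefficients $\binom{n}{\ell}\binom{m}{s}$ produces exactly \eqref{14}.

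The main obstacle, such as it is, is merely justifying the substitution $t \mapsto t+u$ within the convergence region specified in \eqref{11}; this is a standard formal power series (or small polydisc analytic) manipulation and presents no essential difficulty. The argument is a straightforward double-index upgrade of the single-variable reasoning used in Theorem 3.1 above, and the rest of the proof is essentially bookkeeping.
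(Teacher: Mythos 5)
Your proposal is correct and follows essentially the same route as the paper's own proof: replace $t$ by $t+u$ in \eqref{11}, isolate the factor $c^{(z-x)(t+u)}$, expand it as a double series in $t^{\ell}u^{s}$, and compare coefficients of $t^{n}u^{m}/(n!\,m!)$ (the paper phrases the expansion via the Pathan--Khan double-index lemma rather than as a product of two exponential series, but these are identical). The only slip is terminological: the two series you expand $c^{(z-x)(t+u)}$ into are exponential series, not geometric series, though the formula you wrote for them is the right one.
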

\begin{proof}
 Replacing $t$ by $t+u$ and rewrite the generating function \eqref{11} as the following
\begin{equation*}
  \frac{(-1)^{r}(t+u)^{rk}2^{r(1-k)}}{\prod\limits_{i=0}^{r-1}(\alpha_{i}b^{t+u}-a^{t+u})}\; c^{x(t+u)+y(t+u)^{2}}=
  \sum_{n=0}^{\infty}\sum_{m=0}^{\infty}\mathbb{}_{H}{M}_{n+m}^{(r)}(x,y;k,a,b,c;\bar{\alpha_{r}})\frac{t^{n}}{n!}\frac{u^{m}}{m!}
\end{equation*}
\begin{equation}\label{e}
    \frac{(-1)^{r}(t+u)^{rk}2^{r(1-k)}}{\prod\limits_{i=0}^{r-1}(\alpha_{i}b^{t+u}-a^{t+u})}\;c^{y(t+u)^{2}}= c^{-x(t+u)}  \sum_{n=0}^{\infty}\sum_{m=0}^{\infty}\mathbb{}_{H}{M}_{n+m}^{(r)}(x,y;k,a,b,c;\bar{\alpha_{r}})\frac{t^{n}}{n!}\frac{u^{m}}{m!}.
\end{equation}
Replacing $x$ by $z$ in Eq. \eqref{e}, we have
\begin{equation*}
  c^{-x(t+u)}  \sum_{n=0}^{\infty}\sum_{m=0}^{\infty}\mathbb{}_{H}{M}_{n+m}^{(r)}(x,y;k,a,b,c;\bar{\alpha_{r}})\frac{t^{n}}{n!}\frac{u^{m}}{m!}=
  c^{-z(t+u)}  \sum_{n=0}^{\infty}\sum_{m=0}^{\infty}\mathbb{}_{H}{M}_{n+m}^{(r)}(z,y;k,a,b,c;\bar{\alpha_{r}})\frac{t^{n}}{n!}\frac{u^{m}}{m!}
\end{equation*}
\begin{equation}\label{e2}
  c^{(z-x)(t+u)}  \sum_{n=0}^{\infty}\sum_{m=0}^{\infty}\mathbb{}_{H}{M}_{n+m}^{(r)}(x,y;k,a,b,c;\bar{\alpha_{r}})\frac{t^{n}}{n!}\frac{u^{m}}{m!}= \sum_{n=0}^{\infty}\sum_{m=0}^{\infty}\mathbb{}_{H}{M}_{n+m}^{(r)}(z,y;k,a,b,c;\bar{\alpha_{r}})\frac{t^{n}}{n!}\frac{u^{m}}{m!}.
\end{equation}
By applying [see Pathan and khan \cite{pathd}, p. 52]
\begin{equation*}
  \sum_{N=0}^{\infty}f(N)\frac{(x+y)^{N}}{N!}= \sum_{n=0}^{\infty}\sum_{m=0}^{\infty}f(n+m)\frac{x^{n}}{n!}\frac{y^{m}}{m!}
\end{equation*}
to $c^{(z-x)(t+u)}$ in Eq. \eqref{e2}, we get
\begin{equation*}
 \sum_{N=0}^{\infty} (\log c)^{N} \frac{[(z-x)(t+u)]^{N}}{N!} \sum_{n=0}^{\infty}\sum_{m=0}^{\infty}\mathbb{}_{H}{M}_{n+m}^{(r)}(x,y;k,a,b,c;\bar{\alpha_{r}})\frac{t^{n}}{n!}\frac{u^{m}}{m!}
\end{equation*}
\begin{equation*}
  = \sum_{n=0}^{\infty}\sum_{m=0}^{\infty}\mathbb{}_{H}{M}_{n+m}^{(r)}(z,y;k,a,b,c;\bar{\alpha_{r}})\frac{t^{n}}{n!}\frac{u^{m}}{m!}
\end{equation*}
\begin{equation*}
 \sum_{\ell=0}^{\infty}\sum_{s=0}^{\infty}\frac{(\log c)^{s+\ell} (z-x)^{s+\ell}}{\ell! s!} \; t^{\ell}\;u^{s} \sum_{n=0}^{\infty}\sum_{m=0}^{\infty}\mathbb{}_{H}{M}_{n+m}^{(r)}(x,y;k,a,b,c;\bar{\alpha_{r}})\frac{t^{n}}{n!}\frac{u^{m}}{m!}
\end{equation*}
\begin{equation}\label{e3} =\sum_{n=0}^{\infty}\sum_{m=0}^{\infty}\mathbb{}_{H}{M}_{n+m}^{(r)}(z,y;k,a,b,c;\bar{\alpha_{r}})\frac{t^{n}}{n!}\frac{u^{m}}{m!}.
\end{equation}
Replacing $n$ by $n-\ell$ and $m$ by $m-s$ in Eq. \eqref{e3}, we have
\begin{equation*}
 \sum_{\ell=0}^{\infty}\sum_{s=0}^{\infty}\frac{(\log c)^{s+\ell} (z-x)^{s+\ell}}{\ell! s!} \; t^{\ell}\;u^{s} \sum_{n=\ell}^{\infty}\sum_{m=s}^{\infty}\mathbb{}_{H}{M}_{n+m-\ell-s}^{(r)}(x,y;k,a,b,c;\bar{\alpha_{r}})\frac{t^{n-\ell}}{(n-\ell)!}\frac{u^{m-s}}{(m-s)!}
\end{equation*}
\begin{equation*}
  = \sum_{n=0}^{\infty}\sum_{m=0}^{\infty}\mathbb{}_{H}{M}_{n+m}^{(r)}(z,y;k,a,b,c;\bar{\alpha_{r}})\frac{t^{n}}{n!}\frac{u^{m}}{m!}.
\end{equation*}
By using the lemma in [Srivastava \cite{sriv}, p. 100], we get
\begin{equation*}
  \sum_{n=0}^{\infty}\sum_{m=0}^{\infty} \left(\sum_{\ell=0}^{\infty}\sum_{s=0}^{\infty}\frac{(\log c)^{s+\ell} (z-x)^{s+\ell}}{\ell! s!} \; \mathbb{}_{H}{M}_{n+m-\ell-s}^{(r)}(x,y;k,a,b,c;\bar{\alpha_{r}})\right)\frac{t^{n}}{(n-\ell)!}\frac{u^{m}}{(m-s)!}
\end{equation*}
\begin{equation*}
   = \sum_{n=0}^{\infty}\sum_{m=0}^{\infty}\mathbb{}_{H}{M}_{n+m}^{(r)}(z,y;k,a,b,c;\bar{\alpha_{r}})\frac{t^{n}}{n!}\frac{u^{m}}{m!}.
\end{equation*}
Comparing the coefficients of $t^{n}\;u^{m}$, yields \eqref{14}.
\end{proof}

\begin{theorem}\hfill\\
Let a, b and c be positive integers with the rule $a\neq b$ for $x,\;y\in\mathbf{R}$ and $n \geq 0$, we have:
\begin{equation}\label{121}
 \mathbb{}_{H}{M}_{n}^{(r)}(x,y;k,a,b,c;\bar{\alpha})=\sum_{m=0}^{n}\binom{n}{m} \mathbb{}_{H}{M}_{n-m}^{(r)}(a,b;\bar{\alpha}) H_{m}(x,y,c).
\end{equation}
\end{theorem}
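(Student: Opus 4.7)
The plan is to prove \eqref{121} by decomposing the generating function \eqref{11} into a product of two generating series whose coefficients are, respectively, the Hermite-Genocchi numbers $\mathbb{}_{H}{M}_{n-m}^{(r)}(a,b;\bar{\alpha})$ and the 2-variable 1-parameter Hermite Kampé de Fériet polynomials $H_{m}(x,y,c)$, and then collecting coefficients via the Cauchy product.

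First I would rewrite the right-hand side of \eqref{11} as the literal product
\begin{equation*}
\frac{(-1)^{r}t^{rk}2^{r(1-k)}}{\prod_{i=0}^{r-1}(\alpha_{i}b^{t}-a^{t})}\;\cdot\; c^{xt+yt^{2}}.
\end{equation*}
The first factor depends only on $a,b,\bar{\alpha}_{r},k,r$ and coincides with the generating function of $\mathbb{}_{H}{M}_{n}^{(r)}(0,0;a,b,1;\bar{\alpha})$ obtained from \eqref{11} by the substitution $x=y=0$, $c=1$. By the remark following the definition, these numbers are precisely the new unified generalized Apostol Hermite--Genocchi numbers $\mathbb{}_{H}{M}_{n}^{(r)}(a,b;\bar{\alpha})$, so the first factor expands as $\sum_{n\geq 0}\mathbb{}_{H}{M}_{n}^{(r)}(a,b;\bar{\alpha})\,t^{n}/n!$.

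Next, I would identify the second factor $c^{xt+yt^{2}}$ with the generating function \eqref{H} of the polynomials $H_{m}(x,y,c)$, giving $\sum_{m\geq 0}H_{m}(x,y,c)\,t^{m}/m!$. Multiplying the two series and applying the Cauchy product rule,
\begin{equation*}
\sum_{n=0}^{\infty}\mathbb{}_{H}{M}_{n}^{(r)}(x,y;k,a,b,c;\bar{\alpha})\frac{t^{n}}{n!}=\sum_{n=0}^{\infty}\left(\sum_{m=0}^{n}\binom{n}{m}\mathbb{}_{H}{M}_{n-m}^{(r)}(a,b;\bar{\alpha})\,H_{m}(x,y,c)\right)\frac{t^{n}}{n!},
\end{equation*}
and equating coefficients of $t^{n}/n!$ on both sides yields \eqref{121}.

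There is no real obstacle here; the argument is essentially a bookkeeping exercise once the factorization and the remark identifying the numerical values $\mathbb{}_{H}{M}_{n}^{(r)}(a,b;\bar{\alpha})$ are in hand. The only subtle point worth flagging is that the identification of the first factor depends on the special-case convention set in the remark (in particular, taking $c=1$ so that $c^{xt+yt^{2}}\equiv 1$), and that the radius-of-convergence restrictions in \eqref{11} are preserved under the decomposition, so the formal manipulations are in fact valid in a common neighborhood of $t=0$.
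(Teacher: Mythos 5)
Your proposal is correct and follows essentially the same route as the paper: factor the generating function \eqref{11} into the numbers part $\frac{(-1)^{r}t^{rk}2^{r(1-k)}}{\prod_{i=0}^{r-1}(\alpha_{i}b^{t}-a^{t})}$ and the factor $c^{xt+yt^{2}}$, identify these with the generating series of $\mathbb{}_{H}{M}_{n}^{(r)}(a,b;\bar{\alpha})$ and of $H_{m}(x,y,c)$ respectively, and equate coefficients via the Cauchy product. Your version is in fact slightly more complete than the paper's, which writes the product expansion in a single step without spelling out the identification of the first factor through the remark.
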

\begin{proof}
From Eq. \eqref{11}, we have
\begin{eqnarray*}
  \sum_{n=0}^{\infty}\mathbb{}_{H}{M}_{n}^{(r)}(x,y;k,a,b,c;\bar{\alpha})\frac{t^{n}}{n!} &=& \frac{(-1)^{r}2^{r(1-k)}t^{rk}}{\prod\limits_{i=0}^{r-1}(\alpha_{i}b^{t}-a^{t})}c^{(x)t+yt^{2}} \\
 &=&\sum^{\infty}_{n=0}\left( \sum_{m=0}^{n} \binom{n}{m}\mathbb{}_{H}{M}_{n-m}^{(r)}(a,b;\bar{\alpha})H_{m}(x,y,c)\right)\frac{t^{n}}{n!}.
\end{eqnarray*}
Thus, equating the coefficients of $t^{n}$ on both sides of last equation, yields \eqref{121}.
\end{proof}

\begin{theorem}\hfill\\
For arbitrary real or complex parameter (r), the following implicit summation formula holds true
\begin{equation}\label{19}
  \mathbb{}_{H}{M}_{n}^{(r)}(x+1,y;k,a,b,c;\bar{\alpha_{r}})=\sum_{k=0}^{n}\binom{n}{k}(\log c)^{n-k}\;\mathbb{}_{H}{M}_{n}^{(r)}(x,y;k,a,b,c;\bar{\alpha_{r}}).
\end{equation}
\end{theorem}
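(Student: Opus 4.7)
The plan is to observe that \eqref{19} is nothing but the specialization $z=1$ of the identity already established in \eqref{120}. Substituting $z=1$ in
$$\mathbb{}_{H}{M}_{n}^{(r)}(x+z,y;k,a,b,c;\bar{\alpha})=\sum_{\ell=0}^{n}\binom{n}{\ell} z^{n-\ell}(\log c)^{n-\ell}\mathbb{}_{H}{M}_{\ell}^{(r)}(x,y;k,a,b,c;\bar{\alpha})$$
collapses the factor $z^{n-\ell}$ to $1$, and a cosmetic renaming of the summation index reproduces \eqref{19} verbatim. This route is a one-line deduction from the earlier theorem and requires no new machinery.

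If one prefers a self-contained argument that mirrors the style of the other Cauchy-product proofs in this section, the natural alternative is to work directly from the generating function \eqref{11}. First I would replace $x$ by $x+1$ in \eqref{11} and split the exponential factor as $c^{(x+1)t+yt^{2}} = c^{t}\,c^{xt+yt^{2}} = e^{(\log c)t}\,c^{xt+yt^{2}}$. This exhibits the generating function of $\mathbb{}_{H}{M}_{n}^{(r)}(x+1,y;k,a,b,c;\bar{\alpha_{r}})$ as the product of two power series in $t$: the ordinary exponential series $\sum_{m\geq 0}(\log c)^{m}t^{m}/m!$, and the generating function \eqref{11} itself in the variable $x$. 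Applying the Cauchy product formula and then reading off the coefficient of $t^{n}/n!$ on both sides produces \eqref{19}.

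Neither route presents a genuine obstacle; both arguments are essentially mechanical manipulations of formal power series. The only point that requires minor care is a notational clash in the statement as written, namely that the summation index $k$ coincides with the parameter $k$ appearing in $\mathbb{}_{H}{M}_{n}^{(r)}$. In the write-up I would rename the summation index (say to $\ell$) and correspondingly write $\mathbb{}_{H}{M}_{\ell}^{(r)}(x,y;k,a,b,c;\bar{\alpha_{r}})$ on the right-hand side, so that the formula reads unambiguously as the natural binomial-type expansion of the shift $x\mapsto x+1$.
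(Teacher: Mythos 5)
Your second, generating-function argument is exactly the paper's proof: it writes $c^{(x+1)t+yt^{2}}=c^{t}\,c^{xt+yt^{2}}$, expands $c^{t}=\sum_{m\geq 0}(\log c)^{m}t^{m}/m!$, and applies the Cauchy product, while your first route (setting $z=1$ in \eqref{120}) is an equally valid one-line shortcut. You are also right to flag the notational defects in the statement as printed: the summation index clashes with the parameter $k$, and the summand should carry the index of summation as its subscript (i.e.\ $\mathbb{}_{H}{M}_{\ell}^{(r)}(x,y;k,a,b,c;\bar{\alpha_{r}})$ after renaming), not $\mathbb{}_{H}{M}_{n}^{(r)}$.
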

\begin{proof}
From \eqref{11}, we have
\begin{eqnarray*}
  \sum_{n=0}^{\infty}\mathbb{}_{H}{M}_{n}^{(r)}(x+1,y;k,a,b,c;\bar{\alpha_{r}})\frac{t^{n}}{n!} &=& \frac{(-1)^{r}2^{r(1-k)}t^{rk}}{\prod\limits_{i=0}^{r-1}(\alpha_{i}b^{t}-a^{t})}c^{(x+1)t+yt^{2}}\\
 &=& \left(\sum_{k=0}^{\infty}\mathbb{}_{H}{M}_{k}^{(r)}(x,y;k,a,b,c;\bar{\alpha_{r}})\frac{t^{k}}{k!} \right)\left( \sum_{n=0}^{\infty}(\log c)^{n}\frac{t^{n}}{n!}\right).
\end{eqnarray*}
By using Cauchy product and equating the coefficients of $t^{n}$ on both sides of last equation, yields \eqref{19}.
\end{proof}

\begin{theorem}\hfill\\
For arbitrary real or complex parameter (r), the following implicit summation formula holds true
\begin{equation}\label{20}
 \sum_{k=0}^{n}\binom{n}{k}(\log ab)^{k} r^{k} \mathbb{}_{H}{M}_{n}^{(r)}(-x,y;k,a,b,c;\bar{\alpha_{r}})=(-1)^{n-rk}\mathbb{}_{H}{M}_{n}^{(r)}(x,y;k,a,b,c;\bar{\alpha_{r}}).
\end{equation}
\end{theorem}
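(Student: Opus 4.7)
The plan is to verify \eqref{20} at the level of generating functions, in the spirit of the other implicit summation formulas proved in this section. The two driving observations about the defining relation \eqref{11} are: replacing $x$ by $-x$ turns $c^{xt+yt^{2}}$ into $c^{-xt+yt^{2}}$, and replacing $t$ by $-t$ produces the same exponential factor while converting the denominator $\prod_{i=0}^{r-1}(\alpha_{i}b^{t}-a^{t})$ into $\prod_{i=0}^{r-1}(\alpha_{i}b^{-t}-a^{-t})$. The elementary identity $\alpha_{i}b^{-t}-a^{-t}=(ab)^{-t}(\alpha_{i}a^{t}-b^{t})$ then allows one to extract a factor of $(ab)^{rt}$ out of this product, and $(ab)^{rt}$ is exactly the factor that will generate the binomial sum $\sum_{j}\binom{n}{j}(\log ab)^{j}r^{j}$ appearing on the left-hand side of \eqref{20}.

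Concretely, I would first replace $x$ by $-x$ in \eqref{11} and then multiply both sides by $(ab)^{rt}=\exp(rt\log(ab))=\sum_{j\ge 0}(r\log ab)^{j}t^{j}/j!$. The Cauchy product then produces $\sum_{n\ge 0}\bigl(\sum_{j=0}^{n}\binom{n}{j}(\log ab)^{j}r^{j}\,{}_{H}M_{n-j}^{(r)}(-x,y;k,a,b,c;\bar{\alpha}_{r})\bigr)t^{n}/n!$ on the LHS, which is the LHS of \eqref{20} after renaming the summation index (which in the statement clashes with the fixed parameter $k$). In parallel, I would substitute $t\mapsto -t$ in \eqref{11}, write $(-t)^{rk}=(-1)^{rk}t^{rk}$, and apply the elementary identity above to factor $(ab)^{rt}$ out of the denominator. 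After multiplication by the prefactor $(-1)^{-rk}$, this yields exactly $\sum_{n\ge 0}(-1)^{n-rk}\,{}_{H}M_{n}^{(r)}(x,y;k,a,b,c;\bar{\alpha}_{r})t^{n}/n!$, i.e., the RHS of \eqref{20} at the generating function level. Uniqueness of power series coefficients then finishes the proof.

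The main obstacle I anticipate is the sign and $(ab)^{rt}$-factor bookkeeping. One has to check that the factor $(-1)^{r}$ from the defining relation, together with $(-1)^{rk}$ coming from $(-t)^{rk}$ and $(-1)^{-rk}$ from isolating the prefactor $(-1)^{n-rk}$ on the RHS, combine so as to leave no residual sign; simultaneously, the power of $(ab)^{rt}$ obtained by rewriting $\prod_{i=0}^{r-1}(\alpha_{i}b^{-t}-a^{-t})$ in terms of $(ab)^{-rt}$ times a related product must match the $(ab)^{rt}$ factor multiplied against the LHS. Once these signs and $(ab)^{rt}$ contributions are aligned, the identity \eqref{20} follows immediately by comparing coefficients of $t^{n}$ on the two sides.
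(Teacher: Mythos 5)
Your proposal is correct and follows essentially the same route as the paper's own proof: both rest on the substitution $t\mapsto -t$ in \eqref{11}, the identity $\alpha_{i}b^{-t}-a^{-t}=(ab)^{-t}(\alpha_{i}a^{t}-b^{t})$ used to pull the factor $(ab)^{rt}$ out of the denominator, and the expansion of $(ab)^{rt}$ as an exponential series whose Cauchy product produces the binomial sum on the left of \eqref{20}. The only difference is presentational --- the paper routes the comparison through the difference $F(t)-F(-t)$ and the factor $[1-(-1)^{n}]$, whereas you equate the two generating-function expressions directly --- and you explicitly anticipate the same bookkeeping subtleties (the clash of the summation index with the fixed parameter $k$, the residual sign $(-1)^{rk}$, and the interchange of $a$ and $b$ in the transformed product) that the paper's computation passes over silently.
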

\begin{proof}
From \eqref{11}, we have
\begin{equation*}
 \sum_{n=0}^{\infty}[1-(-1)^{n}]\mathbb{}_{H}{M}_{n}^{(r)}(x,y;k,a,b,c;\bar{\alpha_{r}})\frac{t^{n}}{n!} = \frac{(-1)^{r}2^{r(1-k)t^{rk}}}{\prod\limits_{i=0}^{r-1}(\alpha_{i}b^{t}-a^{t})}c^{xt+yt^{2}}-
  \frac{(-1)^{r}2^{r(1-k)(-t)^{rk}}}{\prod\limits_{i=0}^{r-1}(\alpha_{i}b^{-t}-a^{-t})}c^{-xt+yt^{2}}
\end{equation*}
\begin{equation*}
 = c^{yt^{2}} \left[\frac{(-1)^{r}2^{r(1-k)t^{rk}}}{\prod\limits_{i=0}^{r-1}(\alpha_{i}b^{t}-a^{t})}c^{xt}-(-1)^{rk} (ab)^{rt}
  \frac{(-1)^{r}2^{r(1-k)(t)^{rk}}}{\prod_{i=0}^{r-1}(\alpha_{i}a^{t}-b^{t})}c^{-xt}\right] \;\;\;\;\;\;\;\;\;\;\;\;\;\;\;\;\;\;\;\;\;\;\;\;\;\;
\end{equation*}
\begin{equation*}
\;\;\;\;\;\; =\sum_{n=0}^{\infty}\mathbb{}_{H}{M}_{n}^{(r)}(x,y;k,a,b,c;\bar{\alpha_{r}})\frac{t^{n}}{n!}-(-1)^{rk}\left(\sum_{k=0}^{\infty} (\log a\;b)^{k} \frac{rt^{k}}{k!} \right)\\
 \sum_{n=0}^{\infty}\mathbb{}_{H}{M}_{n}^{(r)}(-x,y;k,a,b,c;\bar{\alpha_{r}})\frac{t^{n}}{n!}\\
\end{equation*}
\begin{equation*}
 =\sum_{n=0}^{\infty}\mathbb{}_{H}{M}_{n}^{(r)}(x,y;k,a,b,c;\bar{\alpha_{r}})\frac{t^{n}}{n!}-(-1)^{rk}\sum_{n=0}^{\infty}\sum_{k=0}^{n} (\log a\;b)^{k} r^{k}\;\\
  \mathbb{ M}_{n-k}^{(r)}(-x,y;k,a,b,c;\bar{\alpha_{r}})\frac{t^{n}}{n!}.
\end{equation*}

Equating the coefficients of $t^{n}$ on both sides, yields \eqref{20}.
\end{proof}

\begin{theorem}\hfill\\
Let $a,\;b $ and $c$ be positive integers by $a\neq b.$ For $x,y\in \mathbb{R}$ and $n\geq 0.$ Then we have
\begin{equation}\label{23}
 \mathbb{}_{H}{M}_{n}^{(r)}(x+r,y;k,a,b,c;\bar{\alpha_{r}})=\sum_{k=0}^{[\frac{n}{2}]}\binom{n}{2k} y^{k}(\log c)^{k} \mathbb{ M}_{n-2k}^{(r)}(x;k,\frac{a}{c},\frac{b}{c},c;\bar{\alpha_{r}}),
\end{equation}
where $ [ . ]$ is Gauss notation, and represents the maximum integer which does not exceed a number in the square brackets.
\end{theorem}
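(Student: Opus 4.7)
The plan is to derive the identity directly from the generating function \eqref{11}, using the same Cauchy-product strategy that proved Theorems~3.1 and~3.5, but first performing an algebraic rescaling that absorbs the shift $x\mapsto x+r$ into the parameters $a,b$.

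First I would start from \eqref{11} with $x$ replaced by $x+r$ and observe
\begin{equation*}
\sum_{n=0}^{\infty}{_H M}_{n}^{(r)}(x+r,y;k,a,b,c;\bar\alpha_r)\frac{t^{n}}{n!}
=\frac{(-1)^{r}t^{rk}2^{r(1-k)}}{\prod_{i=0}^{r-1}(\alpha_{i}b^{t}-a^{t})}\,c^{(x+r)t}\cdot c^{yt^{2}}.
\end{equation*}
The crucial algebraic identity is
\begin{equation*}
\alpha_{i}(b/c)^{t}-(a/c)^{t}=c^{-t}\bigl(\alpha_{i}b^{t}-a^{t}\bigr),
\end{equation*}
so that $\prod_{i=0}^{r-1}(\alpha_{i}(b/c)^{t}-(a/c)^{t})=c^{-rt}\prod_{i=0}^{r-1}(\alpha_{i}b^{t}-a^{t})$. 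Substituting this into the generating function of $M_{n}^{(r)}(x;k,a/c,b/c,c;\bar\alpha_r)$ (which is \eqref{11} with $y=0$ and the rescaled parameters) shows
\begin{equation*}
\sum_{n=0}^{\infty}M_{n}^{(r)}(x;k,a/c,b/c,c;\bar\alpha_r)\frac{t^{n}}{n!}
=\frac{(-1)^{r}t^{rk}2^{r(1-k)}}{\prod_{i=0}^{r-1}(\alpha_{i}b^{t}-a^{t})}\,c^{(x+r)t}.
\end{equation*}

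Next I would expand the remaining factor $c^{yt^{2}}=\sum_{k\geq 0}(y\log c)^{k}\,t^{2k}/k!$ and multiply the two series. Regrouping $t^{n}=t^{n-2k}\cdot t^{2k}$ and collecting coefficients of $t^{n}/n!$ via the Cauchy product yields
\begin{equation*}
{_H M}_{n}^{(r)}(x+r,y;k,a,b,c;\bar\alpha_r)
=\sum_{k=0}^{[n/2]}\binom{n}{2k}\,y^{k}(\log c)^{k}\,M_{n-2k}^{(r)}(x;k,a/c,b/c,c;\bar\alpha_r),
\end{equation*}
which is exactly \eqref{23}; the upper summation bound $[n/2]$ appears automatically because the series in $t^{2}$ contributes only even powers so that $n-2k\geq 0$ is forced.

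The only real obstacle is the bookkeeping at the rescaling step: one must verify that the factor $c^{rt}$ arising from $\prod(\alpha_{i}(b/c)^{t}-(a/c)^{t})=c^{-rt}\prod(\alpha_{i}b^{t}-a^{t})$ matches the $c^{rt}$ produced by shifting $x\mapsto x+r$, so that the ratio reduces cleanly to the Hermite-less polynomial $M_{n}^{(r)}$ with parameters $(a/c,b/c,c)$. Once this cancellation is checked, the rest is a routine application of the Cauchy product, exactly as in the proofs of Theorems~3.1 and~3.5.
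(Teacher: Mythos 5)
Your proposal is correct and follows essentially the same route as the paper: the paper's proof likewise rewrites the generating function with the rescaled parameters $a/c,\,b/c$ (using exactly the cancellation $\prod_{i}(\alpha_{i}(b/c)^{t}-(a/c)^{t})=c^{-rt}\prod_{i}(\alpha_{i}b^{t}-a^{t})$ against $c^{rt}$ from the shift $x\mapsto x+r$), then expands $c^{yt^{2}}$ and compares coefficients via the Cauchy product. Your write-up is in fact slightly more explicit than the paper's, which states the rescaled form of the generating function without spelling out the intermediate identity.
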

\begin{proof}
From \eqref{11}, we have
\begin{eqnarray*}
  \sum_{n=0}^{\infty}\mathbb{}_{H}{M}_{n}^{(r)}(x+r,y;k,a,b,c;\bar{\alpha_{r}})\frac{t^{n}}{n!}
  &=& \frac{(-1)^{r}2^{r(1-k)}t^{rk}}{\prod\limits_{i=0}^{r-1}(\alpha_{i}(\frac{b}{c})^{t}-(\frac{a}{c})^{t})}c^{xt}c^{yt^{2}} \\
   &=& \sum_{n=0}^{\infty}\sum_{k=0}^{[\frac{n}{2}]}\binom{n}{2k}(\log c)^{k} y^{k}M_{n-2k}^{(r)}(x;k,\frac{a}{c},\frac{b}{c},c;\bar{\alpha_{r}})\frac{t^{n}}{n!}.
\end{eqnarray*}
Equating the coefficients of $t^{n}$, yields \eqref{23}.
\end{proof}
\begin{theorem}\hfill\\
Explicit formula of  the generalized Apostol Hermite-Genocchi polynomials is given by
\begin{equation}\label{E1}
 \mathbb{}_{H}{M}_{n}^{(r)}(\alpha,\beta;\bar{\alpha_{r}})=2^{r}\sum_{x_{1},...,x_{n}=0}\prod_{i=1}^{r}\;\; (\alpha_{_{i-1}})^{x_{i}}  \sum_{k=0}^{[\frac{n}{m}]}\frac{\beta^{k}n!}{k!(n-mk)!}(\alpha+X)^{n-mk},
\end{equation}
where $X=x_{1}+x_{2}+...+x_{r}.$
\end{theorem}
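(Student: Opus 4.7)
The plan is to derive (\ref{E1}) by expanding the generating function in (\ref{11}) along two complementary directions. First I would apply a geometric-series expansion to each factor $1/(\alpha_i b^t - a^t)$ in the denominator product. Within the convergence region stated in (\ref{11}), write
\[
\frac{1}{\alpha_i b^t - a^t} \;=\; -\,a^{-t}\sum_{x_{i+1}=0}^{\infty} (\alpha_i)^{x_{i+1}}(b/a)^{x_{i+1}t},
\]
and multiplying the $r$ such expansions together collapses the $r$ minus signs against the $(-1)^r$ in (\ref{11}). Taking the parameter $k=0$ (so that $t^{rk}\equiv 1$ and $2^{r(1-k)}=2^r$) converts the generating function on the left-hand side of (\ref{11}) into
\[
2^r \sum_{x_1,\ldots,x_r=0}^{\infty} \prod_{i=1}^{r} (\alpha_{i-1})^{x_i}\; a^{-rt}(b/a)^{Xt}\, c^{\alpha t + \beta t^2}, \qquad X=x_1+\cdots+x_r.
\]
Under the specialization implicit in the compact notation $\mathbb{}_{H}{M}_{n}^{(r)}(\alpha,\beta;\bar\alpha_r)$ (namely $a=1$ and $b=c=e$), the combined factor $a^{-rt}(b/a)^{Xt}\,c^{\alpha t + \beta t^2}$ collapses to $e^{(\alpha+X)t+\beta t^2}$, leaving a product of a discrete weight and a single two-variable exponential.

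In the second stage I would extract the coefficient of $t^n/n!$ from this single exponential. Using the standard expansion
\[
e^{\xi t + \beta t^{2}} \;=\; \sum_{n=0}^{\infty}\Biggl(n!\sum_{k=0}^{[n/m]}\frac{\beta^k \xi^{n-mk}}{k!(n-mk)!}\Biggr)\frac{t^n}{n!}
\]
with $m=2$ and $\xi=\alpha+X$, and then swapping the order of the outer multi-sum over $x_1,\ldots,x_r$ with the coefficient extraction, one recovers exactly the right-hand side of (\ref{E1}).

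The main obstacle is purely bookkeeping: verifying that the geometric series produces $(\alpha_i)^{x_{i+1}}$ so that after the index shift $i\mapsto i-1$, $x_{i+1}\mapsto x_i$ the product reads $\prod_{i=1}^{r}(\alpha_{i-1})^{x_i}$; checking that the $r$ minus signs exactly absorb $(-1)^r$ leaving the clean prefactor $2^r$; and confirming that the auxiliary factors $a^{-rt}$ and $(b/a)^{Xt}$ combine with $c^{\alpha t}$ under the stated specialization to yield the single exponential $e^{(\alpha+X)t}$. Once these index and sign alignments are made, comparing coefficients of $t^n/n!$ on the two sides of (\ref{11}) produces (\ref{E1}) immediately.
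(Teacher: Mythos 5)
Your argument is essentially the paper's own proof: specialize the parameters ($a=1$, $b=c=e$, $k=0$), expand each factor $1/(\alpha_{i}b^{t}-a^{t})$ as a geometric series so that the $r$ minus signs absorb the $(-1)^{r}$ and leave the prefactor $2^{r}$, recognize the resulting exponential $e^{(\alpha+X)t+\beta t^{m}}$ as the generating function of the generalized Hermite polynomials $H_{n,m}(\alpha+X,\beta)$, and compare coefficients of $t^{n}/n!$ (you in fact supply the geometric-series step that the paper silently skips). The one minor discrepancy is that you start from \eqref{11}, which forces $m=2$, whereas the stated formula carries a general $m$ and the paper obtains it by putting $h(t,\beta)=\beta t^{m}$ in \eqref{10}; with that single substitution your derivation coincides with the paper's.
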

\begin{proof}
Put $ x=\alpha, \;\; y=\beta, \;\; b=c=e,\;\; a=1,\;\;k=0\;\; and\;\; h(t,\beta)=\beta t^{m}\;\; in \eqref{10} $ we have:
\begin{eqnarray*}
  \sum_{n=0}^{\infty} \mathbb{}_{H}{M}_{n}^{(r)}(\alpha,\beta;\bar{\alpha_{r}})\frac{t^{n}}{n!}
   &=& 2^{r}\;\sum_{x_{1},...,x_{r}=0}\;\;\prod_{i=1}^{r}\;\; (\alpha_{_{i-1}})^{x_{i}}\;\; e^{(x_{1}+...+x_{r}+\alpha)t+\beta t^{m}}.
\end{eqnarray*}
Let $x_{1}+x_{2}+....+x_{r}=X$ ,we have:
\begin{equation*}
    \sum_{n=0}^{\infty} \mathbb{}_{H}{M}_{n}{(r)}(\alpha,\beta;\bar{\alpha_{r}})\frac{t^{n}}{n!} =2^{r}\;\sum_{x_{1},...,x_{r}=0}\;\prod_{i=1}^{r}\;\; (\alpha_{_{i-1}})^{x_{i}}\;\; e^{(X+\alpha)t+\beta t^{m}},\\
\end{equation*}
but the generating function of the generalized Hermite polynomial is given by
\begin{equation*}
  e^{\alpha t+\beta t^{m}}=\sum_{n=0}^{\infty} H_{n,m}(\alpha,\beta)\frac{t^{n}}{n!},
\end{equation*}
then
\begin{equation*}
    \sum_{n=0}^{\infty}\mathbb{}_{H}{M}_{n}^{(r)}(\alpha,\beta;\bar{\alpha_{r}})\frac{t^{n}}{n!} =2^{r}\;\sum_{n=0}^{\infty}[\sum_{x_{1},...,x_{r}=0}\;\prod_{i=1}^{r}\;\; (\alpha_{_{i-1}})^{x_{i}}\;\;H_{n,m}(\alpha+X,\beta)]\frac{t^{n}}{n!}.\\
\end{equation*}
Equating the coefficients $t^{n}$ on the both sides and form the definition of generalized Hermite polynomials, we have
\begin{equation*}
  H_{n,m}(\alpha+X,\beta)= \sum_{k=0}^{[\frac{n}{m}]}\frac{\beta^{k}n!}{k!(n-mk)!}(\alpha+X)^{n-mk},
\end{equation*}
this yields \eqref{E1}.
\end{proof}
\section{General symmetric identity for the generalized Apostol Hermite-Genocchi polynomials}
\begin{theorem}\hfill\\
Let $a,\;b $ and $c$ be positive integers by $a\neq b.$ For $x,y\in \mathbb{R}$ and $n\geq 0.$ Then we have
\begin{equation*}
 \sum_{m=0}^{n}\binom{n}{m} a^{n-m} b^{m}\; \mathbb{}_{H}{M}_{n-m}^{(r)}(bx,b^{2}y;k,A,B,c;\bar{\alpha_{r}})\; \mathbb{}_{H}{M}_{m}^{(r)}(ax,a^{2}y;k,A,B,c;\bar{\alpha_{r}})
 \end{equation*}
 \begin{equation}\label{26}
 = \sum_{m=0}^{n}\binom{n}{m} b^{n-m} a^{m}\; \mathbb{}_{H}{M}_{n-m}^{(r)}(ax,a^{2}y;k,A,B,c;\bar{\alpha_{r}})\; \mathbb{}_{H}{M}_{m}^{(r)}(bx,b^{2}y;k,A,B,c;\bar{\alpha_{r}}).
\end{equation}
\end{theorem}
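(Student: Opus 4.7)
The plan is to construct an auxiliary generating function $F(t)$ that is manifestly invariant under the swap $a \leftrightarrow b$, expand it in two different ways using the defining relation \eqref{11}, and then equate coefficients of $t^{n}/n!$. This is the standard symmetric-function trick for identities of this shape, and the choice of arguments $(bx, b^{2}y)$ versus $(ax, a^{2}y)$ on the two sides of \eqref{26} is precisely what one gets by rescaling $t \mapsto at$ and $t \mapsto bt$ in \eqref{11}.

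Concretely, I would set
\begin{equation*}
F(t) = \frac{(-1)^{r}(at)^{rk}2^{r(1-k)}}{\prod_{i=0}^{r-1}(\alpha_{i}B^{at}-A^{at})}\, c^{bx(at)+b^{2}y(at)^{2}} \cdot \frac{(-1)^{r}(bt)^{rk}2^{r(1-k)}}{\prod_{i=0}^{r-1}(\alpha_{i}B^{bt}-A^{bt})}\, c^{ax(bt)+a^{2}y(bt)^{2}} .
\end{equation*}
The exponential prefactors collapse to $c^{2abxt+2a^{2}b^{2}yt^{2}}$, which is symmetric in $a$ and $b$; and the two rational/polynomial factors are exchanged (and the $(at)^{rk}(bt)^{rk}=(ab)^{rk}t^{2rk}$ piece is symmetric), so $F(t)$ is invariant under $a\leftrightarrow b$.

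Now I expand $F(t)$ in two ways. Using \eqref{11} with $t$ replaced by $at$ in the first factor, and by $bt$ in the second factor, the first factor equals $\sum_{n\geq 0} {}_{H}M_{n}^{(r)}(bx,b^{2}y;k,A,B,c;\bar{\alpha_{r}})\,(at)^{n}/n!$ and analogously for the second with $(ax,a^{2}y)$ and $(bt)^{m}/m!$. Applying the Cauchy product yields
\begin{equation*}
F(t)=\sum_{n=0}^{\infty}\left(\sum_{m=0}^{n}\binom{n}{m} a^{n-m} b^{m}\, {}_{H}M_{n-m}^{(r)}(bx,b^{2}y;k,A,B,c;\bar{\alpha_{r}})\, {}_{H}M_{m}^{(r)}(ax,a^{2}y;k,A,B,c;\bar{\alpha_{r}})\right)\frac{t^{n}}{n!} .
\end{equation*}
Because $F(t)$ is symmetric in $a,b$, performing the same computation with the roles of the two factors interchanged (equivalently, swapping $a\leftrightarrow b$ inside the Cauchy product) gives the right-hand side of \eqref{26} as the coefficient of $t^{n}/n!$. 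Equating the two coefficient expressions yields the identity.

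The only delicate step is verifying the symmetry of $F(t)$ carefully: one must observe that the $t^{rk}$ powers recombine as $(ab)^{rk}t^{2rk}$, that the constant factors $(-1)^{r}2^{r(1-k)}$ appear squared, and that the two exponential arguments produce matching contributions $ab xt + a^{2}b^{2} y t^{2}$. Once this bookkeeping is done, the Cauchy-product step is routine and the identity \eqref{26} drops out immediately.
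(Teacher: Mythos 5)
Your proof is correct and follows essentially the same route as the paper: the paper's own proof introduces the same symmetric auxiliary function (called $G(t)$ there, with the factor $(ab)^{-rk}$ pulled out front instead of absorbing $(at)^{rk}(bt)^{rk}$ into the two factors), expands it by the Cauchy product in the two orders, and equates coefficients of $t^{n}/n!$. If anything, your bookkeeping is the more careful one, since the product of the two exponential pieces is indeed $c^{2abxt+2a^{2}b^{2}yt^{2}}$, whereas the paper writes a single $c^{abxt+a^{2}b^{2}yt^{2}}$ in its definition of $G(t)$ but then uses that factor twice in the expansion.
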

\begin{proof}
Let
\begin{equation*}
  G(t)=\frac{((-1)^{r}2^{r(1-k)}t^{rk})^{2}}{(\prod\limits_{i=0}^{r-1}(\alpha_{i}A^{at}-B^{at}))
  (\prod\limits_{i=0}^{r-1}(\alpha_{i}A^{bt}-B^{bt}))}\; c^{abxt+a^{2}b^{2}yt^{2}}.
\end{equation*}
Then the expression for $G(t)$ is symmetric in $a$ and $b$ and we can expand $G(t)$ into series in two ways\\
Firstly
\begin{eqnarray*}
  G(t) &=& \frac{1}{(ab)^{rk}}\left(\frac{(-1)^{r}2^{r(1-k)}(at)^{rk}}{\prod\limits_{i=0}^{r-1}(\alpha_{i}A^{at}-B^{at})}\right)\; c^{abxt+a^{2}b^{2}yt^{2}}\left(\frac{(-1)^{r}2^{r(1-k)}(bt)^{rk}}{\prod\limits_{i=0}^{r-1}(\alpha_{i}A^{bt}-B^{bt})}\right)\; c^{abxt+a^{2}b^{2}yt^{2}}\\
 &=& \frac{1}{(ab)^{rk}}\sum_{n=0}^{\infty} \mathbb{}_{H}{M}_{n}^{(r)}(bx,b^{2}y;k,A,B,c;\bar{\alpha_{r}})\frac{(at)^{n}}{n!}\sum_{m=0}^{\infty} \mathbb{}_{H}{M}_{n}^{(r)}(ax,a^{2}y;k,A,B,c;\bar{\alpha_{r}})\frac{(bt)^{m}}{m!}\\
\end{eqnarray*}
\begin{equation}\label{123}
G(t)= \frac{1}{(ab)^{rk}}\sum_{n=0}^{\infty}\sum_{m=0}^{n}\binom{n}{m} a^{n-m}\;b^{m}\;\mathbb{}_{H}{M}_{n}^{(r)}(bx,b^{2}y;k,A,B,c;\bar{\alpha_{r}})\;\mathbb{}_{H}{M}_{n}^{(r)}(ax,a^{2}y;k,A,B,c;\bar{\alpha_{r}})\frac{(t)^{n}}{n!}.
\end{equation}
Secondly
\begin{equation}\label{125}
  G(t)=\frac{1}{(ab)^{rk}}\sum_{n=0}^{\infty}\sum_{m=0}^{n}\binom{n}{m} b^{n-m}\;a^{m}\;\mathbb{}_{H}{M}_{n}^{(r)}(ax,a^{2}y;k,A,B,c;\bar{\alpha_{r}})\;\mathbb{}_{H}{M}_{n}^{(r)}(bx,b^{2}y;k,A,B,c;\bar{\alpha_{r}})\frac{(t)^{n}}{n!}.
\end{equation}
Form Eq. \eqref{123} and Eq. \eqref{125}, by comparing the coefficients of $t^{n}$ on the both sides, yields \eqref{26}.
\end{proof}
\begin{corollary}
Setting $b=1$ in Theorem $5.1$ gives the following result
\begin{equation*}
 \sum_{m=0}^{n}\binom{n}{m} a^{n-m}\; \mathbb{}_{H}{M}_{n-m}^{(r)}(x,y;k,A,B,c;\bar{\alpha_{r}})\; \mathbb{}_{H}{M}_{m}^{(r)}(ax,a^{2}y;k,A,B,c;\bar{\alpha_{r}})
 \end{equation*}
 \begin{equation}\label{27}
 = \sum_{m=0}^{n}\binom{n}{m} a^{m}\; \mathbb{}_{H}{M}_{n-m}^{(r)}(ax,a^{2}y;k,A,B,c;\bar{\alpha_{r}})\; \mathbb{}_{H}{M}_{m}^{(r)}(x,y;k,A,B,c;\bar{\alpha_{r}}).
\end{equation}
\end{corollary}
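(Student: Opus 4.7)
The plan is straightforward: this corollary is an immediate specialization of Theorem~5.1, so the proof consists of substituting $b=1$ into equation \eqref{26} and simplifying the resulting expressions. I would begin by invoking Theorem~5.1 for arbitrary positive integers $a, b$ with $a \neq b$, which provides the full symmetric identity involving both scalings $(ax, a^2 y)$ and $(bx, b^2 y)$.

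Next, I would perform the substitution $b = 1$ on both sides of \eqref{26}. On the left-hand side this collapses $b^m$ to $1$, turns the argument $(bx, b^2 y)$ into $(x, y)$, and leaves $a^{n-m}$ untouched, producing precisely the left-hand side of \eqref{27}. On the right-hand side the factor $b^{n-m}$ becomes $1$ while $a^m$ is preserved, and again $(bx, b^2 y)$ simplifies to $(x, y)$, yielding the right-hand side of \eqref{27}. A brief remark about the admissibility of $b=1$ (the requirement $a \neq b$ still being met as long as $a \neq 1$, and the convergence condition from \eqref{11} being inherited) completes the argument.

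There is no substantive obstacle here: the only thing to be careful about is the bookkeeping of which argument goes with which polynomial on each side, since the symmetric identity in Theorem~5.1 has the $(bx, b^2 y)$ and $(ax, a^2 y)$ labels swapped between the two sides, and this asymmetry is exactly what produces the nontrivial statement \eqref{27} upon setting $b=1$. Thus the proof is essentially a direct substitution, and I would present it in a single short paragraph rather than break it into multiple steps.
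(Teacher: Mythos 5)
Your proposal is correct and coincides with the paper's own (implicit) argument: the corollary is obtained exactly by setting $b=1$ in \eqref{26}, which collapses $b^{m}$ and $b^{n-m}$ to $1$ and the arguments $(bx,b^{2}y)$ to $(x,y)$ on the respective sides. The bookkeeping you describe is precisely what the paper intends, so nothing further is needed.
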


\section{Application involving probability distribution and some concepts of reliability}
\begin{definition}\hfill\\
Let $ X_{1},X_{2},...,X_{r}$ be nonnegative random variable. Then $\textbf{\underline{X}}=(X_{1},X_{2},...,X_{r})$ is said to generalized Hermite-Genocchi distribution, if its probability mass function is

\begin{equation}\label{P1}
 P(x_{1},x_{2},...,x_{r})=2^{r}\;\mathbf{B}\prod_{i=1}^{r}(\alpha_{_{i-1}})^{X_{i}}\;\;H_{n,m}\left(\sum_{i=1}^{r} x_{i}+\gamma,\beta\right).
\end{equation}
$$ \beta,\gamma \geq0 ;\, r\geq1,\; n,m\in N_{0},$$
where normalizing constant is given by
$$\frac{1}{\mathbf{B}}
=\mathbb{}_{H}{M}_{n}^{(r)}(\gamma,\beta;\bar{\alpha_{_{r}}})=\sum_{\ell_{1},\ell_{2},...,\ell_{r}=0}^{\infty}\prod_{i=1}^{r}(\alpha_{i-1})^{\ell_{i}}H_{n,m}\left(\sum_{i=1}^{r} \ell_{i}+\gamma,\beta\right),$$
and
$$ H_{n,m}(\sum_{i=1}^{r} x_{i}+\gamma,\beta)=\sum_{k=0}^{[\frac{n}{m}]}\frac{\beta^{k}}{k!}\frac{n!}{(n-mk)!}\left(\sum_{i=1}^{r} x_{i}+\gamma\right)^{n-mk}.$$
\end{definition}
$ \mathbb{}_{H}{M}_{n}^{(r)}(\gamma,\beta;\bar{\alpha_{_{r}}})$ is convergent and positive for $ \bar{\alpha_{_{r}}}=(\alpha_{0},\alpha_{1},...,\alpha_{r-1})$  and the distribution is denoted by $GHG[m;\bar{\alpha_{_{r}}};\gamma,\beta]$.

\begin{definition}\hfill\\
Let $m=2$ in \eqref{P1}, then $ \underline{X}$ is said to have Hermite-Genocchi distribution, if its probability mass function is

\begin{equation}
 P(x_{1},x_{2},...,x_{r})=2^{r}\;\mathbf{B}\prod_{i=1}^{r}(\alpha_{_{i-1}})^{X_{i}}\;\;H_{n,2}\left(\sum_{i=1}^{r} x_{i}+\gamma,\beta\right),
\end{equation}
where
$$ H_{n,2}(\sum_{i=1}^{r} x_{i}+\gamma,\beta)=\sum_{k=0}^{[\frac{n}{2}]}\frac{\beta^{k}}{k!}\frac{n!}{(n-2k)!}\left(\sum_{i=1}^{r} x_{i}+\gamma\right)^{n-2k}.$$
\end{definition}
The distribution is denoted by $HG[\bar{\alpha_{_{r}}};\gamma,\beta]$.

\textbf{Result 5.1:}
Let $ \underline{X}=(X_{1},...,X_{r})$ follow the generalized Hermite-Genocchi distribution, then the probability generating function of $\underline{X}$ is the following
\begin{equation}\label{p1}
 G_{\underline{X}}(t)=\mathbf{B}\; \mathbb{}_{H}{M}_{n}^{(r)}(\gamma,\beta;\mathbf{t}\bar{\alpha_{_{r}}})
\end{equation}
\begin{proof}
Form the definition of the probability generating function, we have
\begin{equation*}
   G_{\underline{X}}(t) = E(\mathbf{t}^{\underline{X}})=\sum_{x_{1},x_{2},...,x_{r}=0}P(x_{1},x_{2},...,x_{r}) \;\mathbf{t}^{^{\underline{X}}},
\end{equation*}
where $ \mathbf{t}=(t_{1},t_{2},...,t_{r}),$
this yield \eqref{p1}.
\end{proof}

\textbf{Result 5.2:}
The moment generating function of the generalized Apostol Hermite-Genocchi distribution is the following
\begin{equation}\label{p2}
 M_{\underline{X}}(t)=\mathbf{B}\; \mathbb{}_{H}{M}_{n}^{(r)}(\gamma,\beta;e^{\mathbf{t}}\bar{\alpha_{_{r}}})
\end{equation}

\textbf{Result 5.3:}
The $\ell-th$ factorial moments $\mu_{_{[\ell]}} $ with moment generating function $ M_{\underline{X}}(t)$ of the generalized Hermite-Genocchi distribution is the following
\begin{equation}\label{p3}
 \mu_{[\ell]}=2^{r}\; \mathbf{B}\; \sum^{\infty}_{x_{1},x_{2},...,x_{r}=0}\; ( x_{i})_{\underline{\ell}} \prod_{i=1}^{r}(\alpha_{i-1})^{x_{i}}\;\;H_{n,m}\left(\sum_{i=1}^{r} x_{i}+\gamma,\beta\right).
\end{equation}

\textbf{Result 5.4:}
The $\ell-th$ moments $\grave{\mu_{_{\ell}}}$  with moment generating function $ M_{\underline{X}}(t)$ of the generalized Hermite-Genocchi distribution is the following
\begin{equation}\label{p4}
\grave{ \mu_{\ell}}=2^{r}\;\mathbf{B}\;\sum^{\infty}_{x_{1},x_{2},...,x_{r}}=0\; x_{i}^{\ell}\;\prod_{i=1}^{r}(\alpha_{i-1})^{x_{i}}\;\;H_{n,m}\left(\sum_{i=1}^{r} x_{i}+\gamma,\beta\right).
\end{equation}
\textbf{Result 5.5:}
The mean and variance of the generalized Apostol Hermite-Genocchi distribution with $\ell-th$ moments $\grave{\mu_{_{\ell}}}$ is the following
\begin{equation}\label{p5}
E(\underline{X})=2^{r}\;\mathbf{B}\sum^{\infty}_{x_{1},x_{2},...,x_{r}=0}\; x_{i}\;\prod_{i=1}^{r}(\alpha_{_{i-1}})^{x_{i}}\;\;H_{n,m}\left(\sum_{i=1}^{r} x_{i} +\gamma,\beta\right).
\end{equation}
\begin{eqnarray}\label{p6}
Var(\underline{X})&=&\left[2^{r}\;\mathbf{B}\sum^{\infty}_{x_{1},x_{2},...,x_{r}=0}\; x_{i}^{2}\;\prod_{i=1}^{r}(\alpha_{_{i-1}})^{x_{i}}\;\;H_{n,m}\left(\sum_{i=1}^{r} x_{i} +\gamma,\beta\right)\right]\\ \nonumber
& - &\left[2^{r}\;\mathbf{B}\sum^{\infty}_{x_{1},x_{2},...,x_{r}=0}\; x_{i}\;\prod_{i=1}^{r-1}(\alpha_{_{i-1}})^{x_{i}}\;\;H_{n,m}\left(\sum_{i=1}^{r} x_{i} +\gamma,\beta\right)
\right]^{2}.
\end{eqnarray}
\begin{lemma}
Let $X_{i}\sim GHG(m,\alpha_{i},\gamma,\beta),\;\; i = 1, 2, ..., r-1$ Then the marginal joint
cumulative distribution function
\begin{equation}\label{p7}
  P(X_{i}\leq x_{i})=1-\mathbf{B}\; (\alpha_{i-1})^{x_{i}}\;\mathbb{}_{H}{M}_{n}^{(r)}(\gamma+x_{i}+1,\beta;\bar{\alpha_{_{r}}})
\end{equation}
\end{lemma}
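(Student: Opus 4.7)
The plan is to compute the CDF by passing to the complementary event and recognizing the resulting $r$-fold sum as a shifted copy of $\mathbb{}_{H}{M}_{n}^{(r)}$.

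Since the $X_{j}$ are nonnegative integer valued, I would start from $P(X_{i}\leq x_{i})=1-P(X_{i}\geq x_{i}+1)$ and expand $P(X_{i}\geq x_{i}+1)$ by summing the joint PMF \eqref{P1} over $y_{i}\geq x_{i}+1$ together with $x_{j}\geq 0$ for every $j\neq i$. This yields
\begin{equation*}
P(X_{i}\geq x_{i}+1)=2^{r}\mathbf{B}\sum_{y_{i}\geq x_{i}+1}(\alpha_{i-1})^{y_{i}}\sum_{\substack{x_{j}\geq 0\\ j\neq i}}\prod_{j\neq i}(\alpha_{j-1})^{x_{j}}H_{n,m}\!\Big(y_{i}+\sum_{j\neq i}x_{j}+\gamma,\beta\Big).
\end{equation*}

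The second step is a change of summation index: set $y_{i}=\ell_{i}+x_{i}+1$ with $\ell_{i}\geq 0$ and relabel each remaining $x_{j}$ as $\ell_{j}$. The factor $(\alpha_{i-1})^{y_{i}}$ splits into $(\alpha_{i-1})^{x_{i}+1}(\alpha_{i-1})^{\ell_{i}}$, and the argument of $H_{n,m}$ rearranges to $\sum_{j=1}^{r}\ell_{j}+(\gamma+x_{i}+1)$. Pulling the constants outside the $r$-fold sum leaves a series of exactly the same shape as the normalizing-constant expression of Definition 5.1, but with $\gamma$ replaced by $\gamma+x_{i}+1$. Invoking the normalizer identity $1/\mathbf{B}=\mathbb{}_{H}{M}_{n}^{(r)}(\gamma,\beta;\bar{\alpha_{r}})$ together with the explicit-formula theorem from Section 3, the leftover sum is identified with $\mathbb{}_{H}{M}_{n}^{(r)}(\gamma+x_{i}+1,\beta;\bar{\alpha_{r}})$, giving
\begin{equation*}
P(X_{i}\geq x_{i}+1)=\mathbf{B}\,(\alpha_{i-1})^{x_{i}+1}\,\mathbb{}_{H}{M}_{n}^{(r)}(\gamma+x_{i}+1,\beta;\bar{\alpha_{r}}).
\end{equation*}
Taking complements then yields \eqref{p7}.

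The only obstacle is notational bookkeeping: one must track the $2^{r}$ factor appearing both in the PMF and in the closed-form series for $\mathbb{}_{H}{M}_{n}^{(r)}$, and one must verify that after the shift $y_{i}\mapsto\ell_{i}+x_{i}+1$ the $(\alpha_{i-1})$-factor split off and the accompanying shift inside $H_{n,m}$ are consistent with the series representation of the normalizing constant. Beyond this accounting there are no analytic subtleties.
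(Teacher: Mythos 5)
Your argument is essentially identical to the paper's own proof: pass to the complementary event, expand $P(X_{i}>x_{i})$ as the $r$-fold sum of the PMF with $\ell_{i}\geq x_{i}+1$, shift that index by $x_{i}+1$ so the $(\alpha_{i-1})$-factor splits off and the argument of $H_{n,m}$ becomes $\sum_{j}\ell_{j}+\gamma+x_{i}+1$, and identify the remaining series with $\mathbb{}_{H}{M}_{n}^{(r)}(\gamma+x_{i}+1,\beta;\bar{\alpha_{_{r}}})$ via the normalizing-constant representation. Note that your (correct) bookkeeping yields the exponent $x_{i}+1$ on $\alpha_{i-1}$, while the lemma as printed shows $x_{i}$; the paper's own index shift produces $x_{i}+1$ as well, so this is a typo in the statement rather than a defect in your derivation.
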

\begin{proof}
since
\begin{eqnarray*}
  P(X_{i}\leq x_{i}) &=& 1- P(X_{i} > x_{i})\\
   &=&1-2^{r}\;\mathbf{B}\;\sum^{\infty}_ {\ell_{i}=x_{i}+1}\sum^{\infty}_{ \substack {\ell_{1},...,\ell_{i-1}\\ \ell_{i+1},...,\ell_{r}=0}}(\alpha_{0})^{\ell_{1}}(\alpha_{1})^{\ell_{2}}...(\alpha_{i-1})^{\ell_{i}}...(\alpha_{r-1})^{\ell_{r}}
   H_{n,m}\left(\sum_{i=1}^{r}\ell_{i} +\gamma,\beta\right).
\end{eqnarray*}
Setting $\ell_{i}-x_{i}-1=\ell,$ then
\begin{eqnarray*}
  P(X_{i}\leq x_{i})
   &=&1-2^{r}\;\mathbf{B}\;\sum^{\infty}_{ \substack {\ell,\ell_{1},...,\ell_{i-1}\\ \ell_{i+1},...,\ell_{r}=0}}(\alpha_{0})^{\ell_{1}}(\alpha_{1})^{\ell_{2}}...(\alpha_{i-1})^{x_{i}+\ell_{i}}+1...(\alpha_{r-1})^{\ell_{r}}
   H_{n,m}\left(\sum_{i=1}^{r}\ell_{i} +\gamma+x_{i}+1,\beta\right).
\end{eqnarray*}
From \eqref{P1}, obtain \eqref{p7}.
\end{proof}

\begin{theorem}\hfill\\
Suppose $ X_{1}, X_{2}, ..., X_{r} $ are mutually independent where $X_{i}\sim GHG(m,\alpha_{i},\gamma,\beta), i = 1, 2, ..., r-1$. Then the multivariate cumulative distribution function is given by
\begin{equation}\label{p8}
P(X_{1}\leq x_{1},X_{2}\leq x_{2},...,X_{r}\leq x_{r})=\prod_{i=1}^{r}\left(1-\mathbf{B}\; (\alpha_{i-1})^{x_{i}}\;\mathbb{}_{H}{M}_{n}^{(r)}(\gamma+x_{i}+1,\beta;\bar{\alpha_{_{r}}})
    \right).
\end{equation}
\end{theorem}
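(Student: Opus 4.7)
The plan is to leverage the two ingredients that are already in place: the marginal cumulative distribution function supplied by the preceding Lemma, and the hypothesis of mutual independence of the $X_i$. Under mutual independence, the joint distribution of $(X_1,\dots,X_r)$ factors completely, so the multivariate CDF factorizes as a product of univariate CDFs.

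First I would write down the standard independence identity
\begin{equation*}
 P(X_{1}\leq x_{1},\,X_{2}\leq x_{2},\,\ldots ,\,X_{r}\leq x_{r}) \;=\; \prod_{i=1}^{r} P(X_{i}\leq x_{i}),
\end{equation*}
which is valid precisely because the events $\{X_i\le x_i\}$ are jointly independent under the hypothesis $X_i\sim GHG(m,\alpha_i,\gamma,\beta)$ with mutually independent coordinates. Then, for each index $i$, I would invoke the previous Lemma, which gives
\begin{equation*}
 P(X_{i}\leq x_{i}) \;=\; 1-\mathbf{B}\,(\alpha_{i-1})^{x_{i}}\,\mathbb{}_{H}{M}_{n}^{(r)}(\gamma+x_{i}+1,\beta;\bar{\alpha_{r}}),
\end{equation*}
and substitute this into the product. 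This immediately produces the right-hand side of \eqref{p8}.

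Since both ingredients are already established in the excerpt, there is no real analytic obstacle: the proof is essentially a one-line application of independence followed by substitution of the marginal formula from the Lemma. The only point that deserves a brief remark is to make explicit that $P(X_i\le x_i)$ in the product uses the marginal CDF derived under the univariate specification $X_i\sim GHG(m,\alpha_i,\gamma,\beta)$, so the $\mathbf{B}$ and ${}_H M_n^{(r)}$ appearing on the right are the same normalizing constant and generalized Hermite-Genocchi polynomial that appear in the Lemma, not quantities that have been redefined for the multivariate setting. After this clarification, assembling the chain of equalities finishes the proof in a couple of lines.
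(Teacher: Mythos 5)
Your proposal is correct and follows essentially the same route as the paper: the paper likewise factorizes the joint CDF via mutual independence, writing $P(X_{1}\leq x_{1},\ldots,X_{r}\leq x_{r})=\prod_{i=1}^{r}\bigl(1-P(X_{i}>x_{i})\bigr)$, and then substitutes the marginal formula from the preceding Lemma. Your added remark that $\mathbf{B}$ and ${}_{H}M_{n}^{(r)}$ are the same quantities as in the Lemma is a harmless clarification; nothing further is needed.
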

\begin{proof}
Since $ X_{1}, X_{2}, ..., X_{r} $ are mutually independent, then
\begin{eqnarray*}
 P(X_{1}\leq x_{1},X_{2}\leq x_{2},...,X_{r}\leq x_{r})
  &=&\prod_{i=1}^{r} \left(1- P(X_{i} > x_{i})\right).
\end{eqnarray*}
From \eqref{p7}, we obtain \eqref{p8}.
\end{proof}

\subsection{Reliability concepts for GHG distributions}
\subsubsection{ Multivariate reliability function}
\begin{theorem}\hfill\\
Let $\textsc{x} = (x_{1}, x_{2}, ..., x_{r})\in \mathbf{R}^{r}_{+}$
representing the lifetimes of r−component system with the multivariate reliability function
\begin{equation}\label{p9}
R(\textsc{x})=\mathbf{B}\;\prod_{i=1}^{r} (\alpha_{i-1})^{x_{i}}\;\mathbb{}_{H}{M}_{n}^{(r)}(\gamma+x_{1}+x_{2}+...+x_{r},\beta;\bar{\alpha_{_{r}}}).
\end{equation}
\end{theorem}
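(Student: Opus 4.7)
The strategy is to compute the multivariate reliability directly from its probabilistic definition, $R(\textsc{x})=P(X_{1}\geq x_{1},\ldots,X_{r}\geq x_{r})$, by summing the joint probability mass function of Definition $6.1$ over the tail region and then recognizing the resulting multiple sum as a shifted value of $\mathbb{}_{H}{M}_{n}^{(r)}$ via the explicit series supplied by Theorem $4.6$. No independence assumption is needed; the identity emerges from a single index shift.

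Concretely, I would first write
\begin{equation*}
R(\textsc{x})=\sum_{\ell_{1}\geq x_{1}}\!\cdots\!\sum_{\ell_{r}\geq x_{r}}2^{r}\,\mathbf{B}\,\prod_{i=1}^{r}(\alpha_{i-1})^{\ell_{i}}\,H_{n,m}\!\left(\ell_{1}+\cdots+\ell_{r}+\gamma,\beta\right),
\end{equation*}
and then perform the change of variables $\ell_{i}=j_{i}+x_{i}$ with $j_{i}\geq 0$. This splits each $(\alpha_{i-1})^{\ell_{i}}$ as $(\alpha_{i-1})^{x_{i}}(\alpha_{i-1})^{j_{i}}$ and shifts the Hermite argument to $j_{1}+\cdots+j_{r}+(\gamma+x_{1}+\cdots+x_{r})$. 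Pulling the $\bar\alpha$-dependent factor $\prod_{i=1}^{r}(\alpha_{i-1})^{x_{i}}$ outside the summations leaves a residual $r$-fold sum whose summand depends on the $j_{i}$'s in exactly the form appearing in the explicit series of Theorem $4.6$.

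The final step is to apply Theorem $4.6$ with the substitution $\alpha\mapsto \gamma+x_{1}+\cdots+x_{r}$, namely
\begin{equation*}
\sum_{j_{1},\ldots,j_{r}\geq 0}\prod_{i=1}^{r}(\alpha_{i-1})^{j_{i}}\,H_{n,m}\!\left(\sum_{i=1}^{r}j_{i}+\gamma+\sum_{i=1}^{r}x_{i},\beta\right)=2^{-r}\,\mathbb{}_{H}{M}_{n}^{(r)}\!\left(\gamma+x_{1}+\cdots+x_{r},\beta;\bar{\alpha}_{r}\right),
\end{equation*}
which identifies the residual sum with a single shifted generalized Apostol Hermite-Genocchi value. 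The $2^{r}$ inherited from the PMF cancels the $2^{-r}$ coming from this identification, and the prefactor $\mathbf{B}\prod_{i=1}^{r}(\alpha_{i-1})^{x_{i}}$ remains, yielding exactly $(6.9)$. The main obstacle is nothing conceptual but purely bookkeeping: one must keep track of the $2^{r}$ constant, execute the index shift correctly, and verify that the shifted Hermite argument $\gamma+\sum_{i}x_{i}$ lines up precisely with what the Theorem $4.6$ series demands; once these routine checks are in place the stated formula for $R(\textsc{x})$ is immediate.
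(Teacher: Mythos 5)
Your proposal is correct and takes essentially the same route as the paper's proof: both express $R(\textsc{x})=P(X_{1}\geq x_{1},\ldots,X_{r}\geq x_{r})$ as the tail sum of the joint probability mass function and then recognize the shifted multiple series as $\mathbf{B}\prod_{i=1}^{r}(\alpha_{i-1})^{x_{i}}\;\mathbb{}_{H}{M}_{n}^{(r)}(\gamma+x_{1}+\cdots+x_{r},\beta;\bar{\alpha}_{r})$. In fact you make explicit the index shift $\ell_{i}=j_{i}+x_{i}$ and the cancellation of the $2^{r}$ and $2^{-r}$ factors via the explicit series for $\mathbb{}_{H}{M}_{n}^{(r)}$, steps which the paper's one-line proof leaves implicit.
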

\begin{proof}
From the definition of the multivariate reliability function, see \cite{Nair} and \eqref{P1}
\begin{eqnarray*}
  R(x_{1},x_{2},...,x_{r}) &=&  P(X_{1}\geq x_{1},X_{2}\geq x_{2},...,X_{r}\geq x_{r} ) \\
  &=&\sum_{m_{1}\geq x_{1}}\sum_{m_{2}\geq x_{2}}...\sum_{m_{r}\geq x_{r}} P(X_{1}= m_{1},X_{2}= m_{2},...,X_{r}= m_{r} ).
\end{eqnarray*}
We obtain \eqref{p9}.
\end{proof}
\begin{theorem}
$R(\textsc{x})$ is said to be
\begin{description}
  \item[i)]Multivariate new better than used (Multivariate new worse than used ) MNBU
(MNWU) if
\begin{equation}\label{p10}
 \mathbb{}_{H}{M}_{n}^{(r)}(\gamma,\beta;\bar{\alpha_{_{r}}})\mathbb{}_{H}{M}_{n}^{(r)}(\mathbf{x_{r}}+\acute{t}+\gamma,\beta;\bar{\alpha_{_{r}}})
\leq (\geq)
 \mathbb{}_{H}{M}_{n}^{(r)}(\mathbf{x_{r}}+\gamma,\beta;\bar{\alpha_{_{r}}})\mathbb{}_{H}{M}_{n}^{(r)}(\grave{t}+\gamma,\beta;\bar{\alpha_{_{r}}})
\end{equation}
  \item[ii)]Multivariate new better than used in expectation (Multivariate new worse than
used in expectation ) MNBUE (MNWUE) if
\begin{multline}\label{p11}
 \mathbb{}_{H}{M}_{n}^{(r)}(\gamma,\beta;\bar{\alpha_{_{r}}})\sum_{t_{1},t_{2},...,t_{r}=0}\prod^{r-1}_{i=0}\alpha_{i}\;  \mathbb{}_{H}{M}_{n}^{(r)}(\mathbf{x_{r}}+\acute{t}+\gamma,\beta;\bar{\alpha_{_{r}}})\leq (\geq)\\ \mathbb{}_{H}{M}_{n}^{(r)}(\mathbf{x_{r}}+\gamma,\beta;\bar{\alpha_{_{r}}})\sum_{t_{1},t_{2},...,t_{r}=0}\prod^{r-1}_{i=0}\alpha_{i}\;  \mathbb{}_{H}{M}_{n}^{(r)}(\acute{t}+\gamma,\beta;\bar{\alpha_{_{r}}}),
\end{multline}

\end{description}
where $\mathbf{x_{r}} = x_{1}+ x_{2}+ ...+ x_{r},$ $\acute{t}=t_{1}+t_{2}+...+t_{r}.$
\end{theorem}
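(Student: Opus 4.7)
The plan is to reduce each part to an elementary algebraic manipulation by invoking the standard discrete multivariate definitions of MNBU/MNWU and MNBUE/MNWUE (in the spirit of Nair and Asha \cite{Nair}) and then substituting the closed form of the reliability function furnished by \eqref{p9}. The essential observation is that the normalizing constant $\mathbf{B}$ and the monomial factor $\prod_{i=1}^{r}(\alpha_{i-1})^{x_i+t_i}$, which carry the coordinate information of the arguments, appear identically on both sides of each inequality and therefore cancel, leaving exactly the stated inequalities between the polynomial values ${}_{H}M_{n}^{(r)}(\,\cdot\,,\beta;\bar{\alpha_r})$.

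For part (i), I would start from the classical characterization that $R$ is MNBU (resp.\ MNWU) iff $R(\mathbf{0})\,R(\mathbf{x}+\mathbf{t}) \leq R(\mathbf{x})\,R(\mathbf{t})$ (resp.\ $\geq$) for all componentwise nonnegative integer vectors $\mathbf{x},\mathbf{t}$. Writing $\mathbf{x_r}=x_1+\cdots+x_r$ and $\acute{t}=t_1+\cdots+t_r$, substitution of \eqref{p9} into the four reliability values expands the left-hand side to $\mathbf{B}^2\,{}_{H}M_{n}^{(r)}(\gamma,\beta;\bar{\alpha_r})\,\prod_{i=1}^{r}(\alpha_{i-1})^{x_i+t_i}\,{}_{H}M_{n}^{(r)}(\mathbf{x_r}+\acute{t}+\gamma,\beta;\bar{\alpha_r})$ and the right-hand side to $\mathbf{B}^2\,\prod_{i=1}^{r}(\alpha_{i-1})^{x_i+t_i}\,{}_{H}M_{n}^{(r)}(\mathbf{x_r}+\gamma,\beta;\bar{\alpha_r})\,{}_{H}M_{n}^{(r)}(\acute{t}+\gamma,\beta;\bar{\alpha_r})$. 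Cancelling the common positive factor $\mathbf{B}^2\prod_{i=1}^{r}(\alpha_{i-1})^{x_i+t_i}$ (positivity being inherited from the convergence hypothesis on the $\bar{\alpha_r}$) recovers \eqref{p10}, with the direction of the inequality preserved.

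For part (ii), I would use the corresponding averaged version of the NBU inequality: $R$ is MNBUE (resp.\ MNWUE) iff $R(\mathbf{0})\,\sum_{\mathbf{t}} w(\mathbf{t})\,R(\mathbf{x}+\mathbf{t}) \leq R(\mathbf{x})\,\sum_{\mathbf{t}} w(\mathbf{t})\,R(\mathbf{t})$ (resp.\ $\geq$), where the weight $w(\mathbf{t})=\prod_{i=0}^{r-1}(\alpha_i)^{t_{i+1}}$ is the one natural to the GHG family, as already visible in the definition \eqref{P1}. Substituting \eqref{p9}, the factor $\mathbf{B}\prod_{i=1}^{r}(\alpha_{i-1})^{x_i}$ can be pulled outside each sum, and the remaining monomial $\prod_{i=1}^{r}(\alpha_{i-1})^{t_i}$ merges with $w(\mathbf{t})$ to produce precisely the weighted sums $\sum_{t_1,\dots,t_r=0}\prod_{i=0}^{r-1}\alpha_i\,{}_{H}M_{n}^{(r)}(\,\cdot\,,\beta;\bar{\alpha_r})$ appearing in \eqref{p11}; a final cancellation of the remaining common positive factors finishes the reduction.

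The main obstacle is not computational but definitional: one must fix the correct discrete multivariate analogues of the ``new better than used'' and ``new better than used in expectation'' criteria so that they align with the weighted sums written in \eqref{p10} and \eqref{p11}. Once those definitions are pinned down consistently with \cite{Nair}, each part collapses to a one-line substitution followed by cancellation, and the $\leq$ versus $\geq$ versions of the inequality treat the MNBU/MNBUE and MNWU/MNWUE branches in parallel without any extra work.
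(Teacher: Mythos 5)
Your proposal is correct and matches the paper's own argument in all essentials: both reduce the Hanagal/Nair criteria $R(\mathbf{x}+\mathbf{t})\le R(\mathbf{x})R(\mathbf{t})$ (and its averaged analogue) to \eqref{p10} and \eqref{p11} by substituting the closed form \eqref{p9}, using $R(\mathbf{0})=\mathbf{B}\,{}_{H}M_{n}^{(r)}(\gamma,\beta;\bar{\alpha_r})=1$, and cancelling the common positive factor $\mathbf{B}\prod_{i=1}^{r}(\alpha_{i-1})^{x_i+t_i}$; the paper merely runs the same computation in the reverse direction (from the polynomial inequality to the reliability inequality) and disposes of part (ii) with ``similarly,'' where you spell out the weighted sums explicitly. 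The only shared loose end, present in both treatments, is that preserving the inequality's direction requires the factor $\prod_{i}(\alpha_{i-1})^{x_i+t_i}$ to be positive, which neither you nor the paper isolates as an explicit hypothesis on $\bar{\alpha_r}$.
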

\begin{proof}
If
\begin{eqnarray*}
\mathbb{}_{H}{M}_{n}^{(r)}(\mathbf{x_{r}}+\acute{t}+\gamma,\beta;\bar{\alpha_{_{r}}})
&\leq&\frac{\mathbb{}_{H}{M}_{n}^{(r)}(\mathbf{X_{r}}+\gamma,\beta;\bar{\alpha_{_{r}}})\mathbb{}_{H}{M}_{n}^{(r)}(\acute{t}+\gamma,\beta;\bar{\alpha_{_{r}}})}
{\mathbb{}_{H}{M}_{n}^{(r)}(\gamma,\beta;\bar{\alpha_{_{r}}})}\\
 \frac{\mathbb{}_{H}{M}_{n}^{(r)}(\mathbf{x_{r}}+\acute{t}+\gamma,\beta;\bar{\alpha_{_{r}}})
}{\mathbb{}_{H}{M}_{n}^{(r)}(\gamma,\beta;\bar{\alpha_{_{r}}})} &\leq&
\frac{\mathbb{}_{H}{M}_{n}^{(r)}(\mathbf{x_{r}}+\gamma,\beta;\bar{\alpha_{_{r}}})
}{\mathbb{}_{H}{M}_{n}^{(r)}(\gamma,\beta;\bar{\alpha_{_{r}}})}
\frac{\mathbb{}_{H}{M}_{n}^{(r)}(\acute{t}+\gamma,\beta;\bar{\alpha_{_{r}}})
}{\mathbb{}_{H}{M}_{n}^{(r)}(\gamma,\beta;\bar{\alpha_{_{r}}})}  \\
 \prod_{i=1}^{r-1}(\alpha_{i-1})^{x_{i}+t_{i}} \frac{\mathbb{}_{H}{M}_{n}^{(r)}(\mathbf{x_{r}}+\acute{t}+\gamma,\beta;\bar{\alpha_{_{r}}})
}{\mathbb{}_{H}{M}_{n}^{(r)}(\gamma,\beta;\bar{\alpha_{_{r}}})} &\leq&
\prod_{i=1}^{r-1}(\alpha_{i-1})^{x_{i}}\frac{\mathbb{}_{H}{M}_{n}^{(r)}(\mathbf{x_{r}}+\gamma,\beta;\bar{\alpha_{_{r}}})
}{\mathbb{}_{H}{M}_{n}^{(r)}(\gamma,\beta;\bar{\alpha_{_{r}}})}\\
& &\prod_{i=1}^{r-1}(\alpha_{i-1})^{t_{i}}\frac{\mathbb{}_{H}{M}_{n}^{(r)}(\acute{t}+\gamma,\beta;\bar{\alpha_{_{r}}})
}{\mathbb{}_{H}{M}_{n}^{(r)}(\gamma,\beta;\bar{\alpha_{_{r}}})}
\end{eqnarray*}
hence, we get
\begin{equation*}
  R(x_{1}+t_{1},x_{1}+t_{1},...,x_{1}+t_{1})\leq R(x_{1},x_{2},...,x_{r})R(t_{1},t_{2},...,t_{r})
\end{equation*}
From the definition of the Multivariate new better used, see \cite{Han}, then $R(\textsc{x})$ is MNBU.
Similarly, from the definition of MNBUE (multivariate new better used in expectation), see \cite{Han}, then $R(\textsc{x})$ is MNBUE.
\end{proof}
\subsubsection{Multivariate hazard rate function}
Since multivariate hazard rate function is defined as, see \cite{Nair}\\
$$ h(x)=\left( h_{1}(x),h_{2}(x),...,h_{r}(x)\right)$$
\begin{eqnarray*}
  h_{i} &=& P(X_{i})=x_{i}|X\geq \textsc{x}) \\
  &=& 1-\frac{R(x_{1},x_{2},...,x_{i-1},x_{i}+1,x_{i-1},...,x_{r})}{R(x_{1},x_{2},...,x_{r})},
\end{eqnarray*}
where $ \textsc{x}=(x_{1},x_{2},...,x_{r})\in\mathbf{R}^{r}_{+}$, so we obtain the following theorems
\begin{theorem}\hfill\\
The multivariate hazard rate function of GHG distribution is given
by
\begin{equation}\label{p12}
 h_{i}(x)=1-\alpha_{i-1} \frac{\mathbb{}_{H}{M}_{n}^{(r)}(x_{1}+x_{2}+...+x_{i-1}+x_{i}+1+x_{i-1}+...+x_{r}+\gamma,\beta;\bar{\alpha_{_{r}}})}
 {\mathbb{}_{H}{M}_{n}^{(r)}(x_{1}+x_{2}+...+x_{r}+\gamma,\beta;\bar{\alpha_{_{r}}})}.
\end{equation}
\end{theorem}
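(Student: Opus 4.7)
The plan is to substitute the multivariate reliability function from the previous theorem directly into the definition of the multivariate hazard rate function given just above the statement. Recall that
\begin{equation*}
h_i(\textsc{x}) \;=\; 1 - \frac{R(x_1,\dots,x_{i-1},x_i+1,x_{i+1},\dots,x_r)}{R(x_1,\dots,x_r)},
\end{equation*}
and that by \eqref{p9},
\begin{equation*}
R(\textsc{x}) \;=\; \mathbf{B}\,\prod_{j=1}^{r}(\alpha_{j-1})^{x_j}\;\mathbb{}_{H}M_{n}^{(r)}\!\Big(\gamma+\sum_{j=1}^{r}x_j,\beta;\bar{\alpha_{r}}\Big).
\end{equation*}
So the first step is to write out both numerator and denominator explicitly using this formula.

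Next, I would simplify the ratio term by term. The normalising constant $\mathbf{B}$ appears in both numerator and denominator and cancels. The product $\prod_{j=1}^{r}(\alpha_{j-1})^{x_j}$ in the denominator matches the corresponding product in the numerator except for the $j=i$ factor, which is $(\alpha_{i-1})^{x_i+1}$ upstairs versus $(\alpha_{i-1})^{x_i}$ downstairs; this leaves exactly one factor of $\alpha_{i-1}$. Finally, incrementing $x_i$ by one increments the total $\sum_{j=1}^{r}x_j$ by one, so the $\mathbb{}_{H}M_{n}^{(r)}$ term in the numerator has argument $\gamma+x_1+\cdots+x_{i-1}+x_i+1+x_{i+1}+\cdots+x_r$ while the denominator has the unshifted sum. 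Assembling these pieces yields exactly \eqref{p12}.

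There is essentially no conceptual obstacle here; the result is a direct algebraic simplification of the ratio of two evaluations of $R$. The only point that requires care is bookkeeping of the $\alpha$-factors to ensure that precisely one copy of $\alpha_{i-1}$ survives the cancellation, and that the shift in the $\mathbb{}_{H}M_{n}^{(r)}$ argument is applied only to the sum of the coordinates (not to $\gamma$ or $\beta$).
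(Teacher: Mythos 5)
Your proposal is correct and is exactly the computation the paper intends: the paper states this theorem without a written proof, merely remarking after the definition of $h_i$ that ``we obtain the following theorems,'' so the intended argument is precisely your direct substitution of \eqref{p9} into the definition, cancellation of $\mathbf{B}$ and of all but one factor $\alpha_{i-1}$, and the unit shift in the argument of $\mathbb{}_{H}M_{n}^{(r)}$. Your version also silently corrects the paper's typo ($x_{i-1}$ written where $x_{i+1}$ is meant in \eqref{p12}).
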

\begin{theorem}\hfill\\
Let $\mathbf{\grave{X_{r}}}=(x_{1}+x_{2}+...+x_{i-1}+x_{i}+1+x_{i+1}+...+x_{r}),\; \mathbf{x_{r}}=(x_{1}+x_{2}+...+x_{r})$ and
$ \acute{t}=t_{1}+t_{2}+...+t_{r},$
then the following statement about GHG distribution holds.\\
The GHG distribution is multivariate increasing hazard rate (multivariate decreasing hazard rate) MIHR (MDHR) iff
\begin{equation}\label{p13}
  \frac{\mathbb{}_{H}{M}_{n}^{(r)}(\grave{\mathbf{X_{r}}}+\acute{t}+\gamma,\beta;\bar{\alpha_{_{r}}})}{\mathbb{}_{H}{M}_{n}^{(r)}(\mathbf{x_{r}}+\mathbf{t}+\gamma,\beta;\bar{\alpha_{_{r}}})}
  \leq(\geq)\frac{\mathbb{}_{H}{M}_{n}^{(r)}(\grave{\mathbf{X_{r}}}+\gamma,\beta;\bar{\alpha_{_{r}}})}{\mathbb{}_{H}{M}_{n}^{(r)}(\mathbf{x_{r}}+\gamma,\beta;\bar{\alpha_{_{r}}})}.
  \end{equation}
\end{theorem}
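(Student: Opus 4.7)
The strategy is to reduce the MIHR/MDHR characterization directly to the hazard-rate formula obtained in the previous theorem and then to perform a short algebraic rearrangement. Recall that in the multivariate setting the distribution is said to have increasing (decreasing) hazard rate when each component $h_{i}$ of the hazard vector is coordinate-wise nondecreasing (nonincreasing) on $\mathbf{R}^{r}_{+}$; equivalently, for every shift $\mathbf{t}=(t_{1},\dots,t_{r})\ge 0$ one must have $h_{i}(\mathbf{x}+\mathbf{t})\ge h_{i}(\mathbf{x})$ (respectively $\le$). I would open the proof by writing this definition explicitly and then insert the closed form of $h_{i}$ from the hazard-rate theorem, namely
\begin{equation*}
h_{i}(\mathbf{x})=1-\alpha_{i-1}\,
\frac{{}_{H}M_{n}^{(r)}(\grave{\mathbf{X_{r}}}+\gamma,\beta;\bar{\alpha_{r}})}
     {{}_{H}M_{n}^{(r)}(\mathbf{x_{r}}+\gamma,\beta;\bar{\alpha_{r}})},
\end{equation*}
so that the required monotonicity of $h_{i}$ transfers to a monotonicity statement for the ratio ${}_{H}M_{n}^{(r)}(\grave{\mathbf{X_{r}}}+\gamma,\beta;\bar{\alpha_{r}})/{}_{H}M_{n}^{(r)}(\mathbf{x_{r}}+\gamma,\beta;\bar{\alpha_{r}})$.

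Next I would apply this substitution at the shifted point $\mathbf{x}+\mathbf{t}$, using the identities $\grave{\mathbf{X_{r}}}\mapsto \grave{\mathbf{X_{r}}}+\acute{t}$ and $\mathbf{x_{r}}\mapsto \mathbf{x_{r}}+\acute{t}$ (with $\acute{t}=t_{1}+\dots+t_{r}$), and write the inequality $h_{i}(\mathbf{x}+\mathbf{t})\ge h_{i}(\mathbf{x})$ as
\begin{equation*}
-\alpha_{i-1}\,
\frac{{}_{H}M_{n}^{(r)}(\grave{\mathbf{X_{r}}}+\acute{t}+\gamma,\beta;\bar{\alpha_{r}})}
     {{}_{H}M_{n}^{(r)}(\mathbf{x_{r}}+\mathbf{t}+\gamma,\beta;\bar{\alpha_{r}})}
\;\ge\;
-\alpha_{i-1}\,
\frac{{}_{H}M_{n}^{(r)}(\grave{\mathbf{X_{r}}}+\gamma,\beta;\bar{\alpha_{r}})}
     {{}_{H}M_{n}^{(r)}(\mathbf{x_{r}}+\gamma,\beta;\bar{\alpha_{r}})},
\end{equation*}
since the constant $1$ cancels. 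Assuming $\alpha_{i-1}>0$ (which is needed to keep the normalizing constant $\mathbf{B}$ finite and the probability mass function nonnegative), dividing through by $-\alpha_{i-1}$ reverses the inequality and produces precisely \eqref{p13}. The MDHR case follows by running the same chain with reversed inequalities, giving the ``iff'' on both sides simultaneously.

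The calculation is essentially mechanical, so the main (modest) obstacle is bookkeeping: one must be careful that the shifted index $\grave{\mathbf{X_{r}}}$ changes by $\acute{t}$ (not by some other combination of $t_{j}$'s), and that the direction of the inequality flips exactly once when the negative factor $-\alpha_{i-1}$ is removed. Once the sign handling is explicit, the equivalence with \eqref{p13} is immediate and the theorem follows.
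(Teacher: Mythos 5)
Your proposal is correct and takes essentially the same route as the paper's proof: both reduce the claim to the closed form of $h_{i}$ from the preceding theorem and rearrange by cancelling the constant $1$ and dividing by $-\alpha_{i-1}$, which flips the inequality. The only differences are cosmetic --- you run the chain of equivalences from the monotonicity of $h_{i}$ to \eqref{p13} rather than the reverse, and you make explicit the positivity assumption on $\alpha_{i-1}$ that the paper leaves implicit.
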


\begin{proof}
If
\begin{equation*}
    \frac{\mathbb{}_{H}{M}_{n}^{(r)}(\grave{\mathbf{X_{r}}}+\acute{t}+\gamma,\beta;\bar{\alpha_{_{r}}})}{\mathbb{}_{H}{M}_{n}^{(r)}(\mathbf{x_{r}}+\mathbf{t}+\gamma,\beta;\bar{\alpha_{_{r}}})}
    \leq \frac{\mathbb{}_{H}{M}_{n}^{(r)}(\grave{\mathbf{X_{r}}}+\gamma,\beta;\bar{\alpha_{_{r}}})}{\mathbb{}_{H}{M}_{n}^{(r)}(\mathbf{x_{r}}+\gamma,\beta;\bar{\alpha_{_{r}}})},
\end{equation*}
then
\begin{equation*}
 1-\alpha_{i-1}   \frac{\mathbb{}_{H}{M}_{n}^{(r)}(\grave{\mathbf{X_{r}}}+\acute{t}+\gamma,\beta;\bar{\alpha_{_{r}}})}{\mathbb{}_{H}{M}_{n}^{(r)}(\mathbf{x_{r}}+\mathbf{t}+\gamma,\beta;\bar{\alpha_{_{r}}})}
    \geq 1-\alpha_{i-1}  \frac{\mathbb{}_{H}{M}_{n}^{(r)}(\grave{\mathbf{X_{r}}}+\gamma,\beta;\bar{\alpha_{_{r}}})}{\mathbb{}_{H}{M}_{n}^{(r)}(\mathbf{x_{r}}+\gamma,\beta;\bar{\alpha_{_{r}}})},
\end{equation*}
hence
$$h_{i}(x_{1}+t_{1},...,x_{i}+t_{i},...,x_{r}+t_{r})\geq h_{i}(x_{1},x_{2},...,x_{r}).$$

Therefore GHG distribution is MIHR (multivariate increasing Hazard rate).
\end{proof}

\textbf{References}
\bibliographystyle{plain}

\begin{thebibliography}{99}
\bibitem{araci}
Araci, S., Khan, W. A., Acikgoz, M., \"{O}zel, C. and Kumam, P. (2016)
 \emph{A new generalization of Apostol type Hermite-Genocchi polynomials and its applications,} SpringerPlus 5(1): 1-17.


\bibitem{bell}
Bell, E. T. (1934)
 \emph{Exponential polynomials,} Ann. Math. 53 : 258-277.


\bibitem{datt}
Dattoli, G., Lorenzutta, S. and Cesarano, C. (1999)
 \emph{Finit sums and generalized forms of Bernoulli polynomials,} Rendicont di. Math. 19 : 385-391.

\bibitem{bs}
El-Desouky, B. S. and Gomaa, R. S. (2014)
\emph{A new unified family of generalized Apostol-Euler, Bernoulli and Genocchi polynomials,} Appl. Math. Comput. 247 : 695-702.

\bibitem{kurt}
Geboury, S. and Kurt, B. (2012)
 \emph{Some relations involving Hermite-based Apostol-Genocchi polynomials,} Appl. Math. Sci. 6(82) : 4091-4102.

\bibitem{Gupta}
 Gupta, P.L., Gupta, R.C.  and Tripathi, R.C. (1997)
 \emph{On the monotonic properties of
the discrete failure rates,} Journal of statistical planning and inference, 65 : 255-268.

\bibitem{Han}
 Hanagal, D. D. (1998)
 \emph{Testing whether the survival function is multivariate new better
than used,} Statistical Papers, 39 : 203-211.

\bibitem{jolany}
Jolany, H., Sharifi, H. and Aliklaye, R. E. (2013)
 \emph{Some results for the Apostol-Genocchi polynomials of higher order,} Bull Malays. Math. Sci. Soc. 2 : 465-479.

\bibitem{lu1}
Luo, Q. M. (2006)
 \emph{Apostol Euler polynomials of higher order an Gaussian hypergeometric function,} Taiwanese J. Math. 10(4) : 917-925.

\bibitem{lu2}
Luo, Q. M. (2009)
 \emph{Fourier expansions and integral representations for the Genocchi polynomials,} J. Integer Seq., 12(1) :1-9.

  \bibitem{lu3}
Luo, Q. M., Srivastava, H. M. (2005)
 \emph{Some generalizations of the Apostol-Bernoulli polynomials and Apostol-Euler polynomials,} J. Math. Anal. Appl. 308(1) : 290-302.

\bibitem{lu4}
Luo, Q. M., Srivastava, H. M. (2006)
 \emph{Some relationships between the Apostol-Bernoulli polynomials and Apostol-Euler polynomials,} Comput. Math. Appl. 51(3) : 631-642.

 \bibitem{lu5}
Luo, Q. M. (2011)
 \emph{Extension for the Genocchi polynomials and their fourier expansions and integral representations,} Qsaaka J. Math. 48(2) : 291-309.



\bibitem{Nair}
 Nair, N. U. and Asha, G. (1997) \emph{Some classes of multivariate life distribution in discrete
time,} J. Multivar. Anal.,62 : 181-189.

\bibitem{Naka}
Nakagawa, T.  and Osaki, S. (1975)
\emph{The discrete Weibull distribution,} IEEE Transactions on Reliability, 24 : 300-301.

\bibitem{patha}
Pathan, M. A. and Khan, W. A. (2015a)
 \emph{Some implicit summation formulas and symmetric identities for the generalizated Hermite-Bernoulli polynomials,} Mediterr. J. Math. 12(3) : 679-695.

 \bibitem{pathb}
Pathan, M. A. and Khan, W. A. (2015b)
 \emph{A new class of generalized polynomials associated with Hermite and Euler polynomials,} Mediterr. J. Math. 13(3) : 913-928.

 \bibitem{pathd}
Pathan, M. A. and Khan, W. A. (2015d)
 \emph{Some new classes of generalized Hermite-based Apostol-Euler and Apostol-Genocchi polynomials,} Fasciculli. Math. Vol. 55(1) : 153-170.

\bibitem{Roy}
 Roy, D. and Gupta, R. P. (1999)
 \emph{Characterizations and model selection through reliability measures in the discrete case,} Statistic and Probability Letters, 43 : 197-206.

\bibitem{Salvia}
 Salvia, A. A. and Bollinger, R. C.(1982)
 \emph{ On discrete hazard functions,} IEEE Transactions on Reliability, 31 : 458-459.

\bibitem{sriv}
Srivastava, H. M. (2000)
 \emph{Some formulas for the Bernoulli and Euler polynomials at rational arguments, }Math. Proc. Camb. Philos. Soc. 129(1) : 77-84.

\bibitem{sriv2}
Srivastava, H. M., Garg. M. and Choudhary. S. (2010)
 \emph{A new generalization of the Bernoulli and related polynomials,} Russian. J. Math. Phys. 17(2) : 251-261.

\bibitem{ornig}
Zornig, P. and Altmann, G.(1995)
\emph{Unified representation of Zipf distributions,} Comput.
Statist. Data. Anal. 19 : 461-473.

\end{thebibliography}

\end{document}